\newcommand{\p}{\mathbb{P}}
\newcommand{\e}{\mathbb{E}}
\newcommand{\E}{\mathbb{E}}
\renewcommand{\P}{\mathbb{P}}
\newtheorem{theorem}{Theorem}[section]
\newtheorem{lemma}[theorem]{Lemma}
\newtheorem{corollary}[theorem]{Corollary}
\newtheorem{conjecture}[theorem]{Conjecture}
\newtheorem*{theorem*}{Theorem}
\tikzset{vtx/.style={inner sep=1.7pt, outer sep=0pt, circle, fill=black,draw}}
\title{Improving {$R(3,k)$} in just two bites}
\author{ Zion Hefty\thanks{Department of Mathematics, University of Denver, Denver, USA. Email: {\tt Zion.Hefty@du.edu}}
\and  Paul Horn\thanks{Department of Mathematics, University of Denver, Denver, USA and Department of Mathematics and Applied Mathematics, University of Johannesburg, Johannesburg, South Africa. Email: {\tt Paul.Horn@du.edu}} 
\and Dylan King\thanks{California Institute of Technology, Pasadena, USA. Email: {\tt dking@caltech.edu}} 
\and Florian Pfender\thanks{Department of Mathematical and Statistical Sciences, University of Colorado Denver, Denver, USA. E-mail: {\tt Florian.Pfender@ucdenver.edu}. Research is partially supported by NSF FRG DMS-2152498.}}
\date{\today}
\begin{document}

\maketitle
\footnotetext[1]{2020 \textit{Mathematics Subject Classification}: 05D10, 05D40, 05C55.}

\begin{abstract}
We present a flexible random construction which, for certain graphs~$H$, is able to produce~$H$-free graphs with edge density strictly larger than that of the~$H$-free process, while simultaneously preserving pseudorandom properties and allowing a much easier analysis.

As our main application, we use this construction to show that the off-diagonal Ramsey numbers satisfy $R(3,k)\ge  \left(\frac12+o(1)\right)\frac{k^2}{\log{k}}$, improving the previously best bound $R(3,k)\ge  \left(\frac13+o(1)\right)\frac{k^2}{\log{k}}$. 
While the best known upper bound is~$R(3,k)\le  \left(1+o(1)\right)\frac{k^2}{\log{k}}$, the constant of~$\frac12$ has been conjectured to be asymptotically tight by multiple groups.
\end{abstract}

\section{Introduction}
The Ramsey number $R(\ell,k)$ is the least~$N$ such that every~$N$-vertex graph contains either a clique of size~$\ell$ or an independent set of size~$k$.
In 1930 Ramsey~\cite{Ramsey1930} showed the existence of~$R(\ell,k)$, and in the intervening century much effort has been dedicated to understanding the asymptotic behavior of these numbers.
The best current bounds on the diagonal Ramsey numbers $R(k,k)$ (which are known exactly only up to $k=4$, with $43\le R(5,5)\le 46$ and the upper bound only very recently announced by Angeltveit and McKay~\cite{angeltveit2025}) are
\[
2^{k/2+o(1)}\le R(k,k)\le (4-c)^k,
\]
for some $c>0$.
The lower bound is one of the earliest instances of Erd\H{o}s' probabilistic method~\cite{Erdos47}, and the upper bound was the first exponential improvement over a result of Erd\H{o}s and Szekeres~\cite{ES35}, only recently announced in 2023 by Campos, Griffiths, Morris and Sahasrabudhe~\cite{campos2025exponentialimprovementdiagonalramsey}, and subsequently optimized to a value of $c\approx 0.2$ (for sufficiently large~$k$) by Gupta, Ndiaye, Norin and Wei~\cite{gupta2024optimizingcgmsupperbound}.

Perhaps the second most studied family of  Ramsey numbers are the extreme off-diagonal numbers $R(\ell,k)$ for~$\ell$ fixed and~$k$ tending to infinity, where the case $\ell=3$ yields the first non-trivial asymptotic question. 
The exciting history of the study of $R(3,k)$ until the early 2000s is described by Spencer~\cite{Spencer2011} in a chapter of the book ``Ramsey Theory"~\cite{SoiferRamsey}, while the more recent history is very well captured by 
Campos, Jenssen, Michelen and Sahasrabudhe~\cite{campos2025}. We briefly summarize the results determining the order of magnitude.
The upper bound of $R(3,k) = O(\frac{k^2}{\log{k}})$ goes back to work of Ajtai, Koml\'os and Szeme\'redi~ \cite{AjtaiKSz80,AjtaiKSz81} in the early 1980s, and in 1983 Shearer~\cite{Shearer83} reduced the implied constant to~$R(3,k) \leq (1+o(1))\frac{k^2}{\log{k}}$.
The lower bound was improved in several steps until Kim~\cite{Kim95} proved in 1995 that~$R(3,k)\ge c\frac{k^2}{\log{k}}$, with a constant $c\approx\frac1{160}$. Kim's construction uses a probabilistic method commonly known as a ``nibble'', where small batches of edges are added randomly for many steps.
Doing so carefully, one can avoid the appearance of triangles while keeping the edge distribution sufficiently random to destroy large independent sets.   
Spencer~\cite{Spencer2011} wrote in 2011:

\vspace{0.2cm}
{\em 
``The value
of a constant $c$ so that $R(3,k)\sim
c\frac{k^2}{\log{k}}$ remains open to this day, but this problem
seems beyond our reach."
}

\vspace{0.2cm}
Nevertheless, only a short time later in 2013, two groups announced that they had greatly improved on the constant in the lower bound, showing that~$R(3,k)\ge \left( \frac14 +o(1)\right)\frac{k^2}{\log{k}}$. Bohman and
Keevash~\cite{BohmanKeevash21} and Fiz Pontiveros, Griffiths and Morris~\cite{Fiz20} very carefully analyzed the triangle-free process, in which an edge, chosen uniformly at random from those which would not create a triangle, is added to the graph (and this operation is repeated until no such edges exist). 
Both of these papers go through serious technical efforts to control the randomness in the process, and in particular show
that with high probability the triangle-free process terminates with independence number yielding this bound. 
Fiz Pontiveros, Griffiths and Morris went so far as to conjecture that $c=\frac14$ is the correct constant for $R(3,k)$.

In another breakthrough 12 years later, Campos, Jenssen, Michelen and Sahasrabudhe~\cite{campos2025} showed that the story of the lower bound was not finished, and in fact $R(3,k)\ge \left( \frac13 +o(1)\right)\frac{k^2}{\log{k}}$. They conjecture that $c=\frac12$ should be the correct constant. 
\begin{conjecture}[Campos, Jenssen, Michelen and Sahasrabudhe~\cite{campos2025}]\label{con1/2}
\[R(3,k)=  \left(\frac12+o(1)\right)\frac{k^2}{\log{k}}.\]
\end{conjecture}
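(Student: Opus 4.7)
The statement is a two-sided asymptotic equality: it combines the lower bound $R(3,k)\ge\bigl(\tfrac12-o(1)\bigr)\tfrac{k^2}{\log k}$ with the matching upper bound $R(3,k)\le\bigl(\tfrac12+o(1)\bigr)\tfrac{k^2}{\log k}$. The two halves are of very different difficulty, and my plan attacks them separately.

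For the lower bound, my plan is to implement the ``two bites'' promised in the title as a two-stage random construction on $n=\bigl(\tfrac12-o(1)\bigr)\tfrac{k^2}{\log k}$ vertices. Split the target edge probability as $p=p_1+p_2$ with $p_1$ taking the bulk. First bite: sample $G_1=G(n,p_1)$ and delete an edge from each triangle, producing a triangle-free graph whose edges are nearly independent and whose codegree profile is well concentrated. Second bite: from the non-edges of $G_1$ whose addition does not create a triangle, sample each independently at an appropriate rate to produce the final graph $G$. The analysis would proceed by showing that (a) $G$ remains triangle-free by construction; (b) the independent sets of size $k$ in $G_1$ form a well-behaved family (via a second-moment or Janson-type estimate on their count), and (c) each such independent set contains enough triangle-free-available non-edges that the second bite kills it, a union bound over the expected number of surviving $k$-independent sets then gives positive probability of $\alpha(G)<k$. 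The tuning of $p_1$ versus $p_2$ is the delicate part: $p_1$ must be large enough to give the standard Shearer-type lower bound on $\alpha$-suppression through triangle deletion alone, while leaving enough slack (in the form of non-triangle-closing non-edges) inside each large independent set so that the second bite provides the extra factor needed to push from $\tfrac13$ to $\tfrac12$.

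For the upper bound, one must improve Shearer's inequality $\alpha(G)\ge(1-o(1))\tfrac{n\log d}{d}$ for triangle-free $G$ of average degree $d$ by a factor of two, to $\alpha(G)\ge(2-o(1))\tfrac{n\log d}{d}$. This flat assertion is almost certainly false in general (graphs from the triangle-free process already nearly saturate Shearer up to a $\sqrt{2}$ factor), so any route must be structural: one would like to show that any triangle-free $G$ with $\alpha(G)<k$ and $|V(G)|>\bigl(\tfrac12+\varepsilon\bigr)\tfrac{k^2}{\log k}$ must resemble one of the two-bite random constructions above, and then run a sharper Shearer-type bound on this restricted family. A natural tool is a container-style reduction that clusters triangle-free graphs with small independence number into a bounded number of templates, each of which is handled by a stability version of Shearer in which the leading $(1-o(1))$ is upgraded to $(2-o(1))$ whenever the local codegree distribution matches a random construction.

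The main obstacle is the upper bound half, and I do not expect to resolve it. Improving Shearer by any constant factor has been open since 1983, and the two-bite construction plus prior work suggests $\tfrac12$ is the right lower-bound constant precisely because random constructions finally catch up to Shearer rather than because Shearer can be pushed down. A genuine proof of the full equality would therefore need a fundamentally new extremal principle for triangle-free graphs of small independence number, and I would treat the upper bound as a separate project resting on a stability theorem of a kind not currently available; the lower bound, by contrast, should yield to the two-bite plan above and is what I would write up as the contribution of this paper.
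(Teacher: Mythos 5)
You have correctly identified that this statement is a two-sided conjecture of which only the lower bound is within reach; that matches the paper, which proves only $R(3,k)\ge\bigl(\tfrac12+o(1)\bigr)\tfrac{k^2}{\log k}$ and leaves the upper bound (an improvement of Shearer) as a conjecture supported by heuristics. Your assessment of the upper bound is therefore fine. The problem is with your lower-bound plan, which is not the paper's construction and contains a genuine gap.

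Your ``two bites'' are a bulk sample $G(n,p_1)$ with per-triangle edge deletion followed by a single large sprinkling of non-triangle-closing non-edges. Both bites fail at the relevant density. For the first: to reach $\alpha(G)\approx\sqrt{n\log n}$ with the right constant you need total density $p\approx\tfrac12\sqrt{\log n/n}$, which sits a factor $\sqrt{\log n}$ above the triangle deletion threshold $n^{-1/2}$; at that density $G(n,p_1)$ has $\Theta(n^3p_1^3)$ triangles against $\Theta(n^2p_1)$ edges, a ratio of order $\log n$, so deleting one edge per triangle can destroy essentially all edges. This is exactly the obstruction that forced Kim's nibble. For the second: claim (a), that $G$ stays triangle-free ``by construction,'' is false as stated. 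Adding a positive-density batch of edges, each of which individually closes no triangle with $G_1$, still creates triangles among the newly added edges themselves (and among two new edges plus one old edge) at the usual binomial rate; preventing this is precisely why the triangle-free process and the nibble proceed in many infinitesimal steps with cleanup at each step. Repairing your plan essentially reconstructs the nibble, which is the machinery the paper is explicitly designed to avoid.

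The paper's actual route is different in kind. It samples two \emph{independent} copies $G_R,G_B$ of $G(N,p)$ with $N=n/\log^2 n$ and $p=\tfrac12\sqrt{\log n/n}$ --- note $p\ll N^{-1/2}$, so each copy is \emph{below} the deletion threshold on its own vertex set --- and forms the co-normal product on $V_R\times V_B$, whose edge density is $\approx 2p=\sqrt{\log n/n}$. Triangle-freeness is then achieved by a deterministic deletion rule exploiting the product structure (removing red edges inside $\binom{X_{r_i}^+}{2}\cup\binom{X_{b_i}}{2}$ and blue edges inside $\binom{X_{r_i}}{2}\cup\binom{X_{b_i}^+}{2}$), which costs only a negligible fraction of edges. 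The independence number is controlled because the two overlaid blow-ups each destroy the structured large independent sets of the other, and a final uniformly random $n$-vertex induced subgraph of the $N^2$-vertex product restores the needed pseudorandomness. The ``two bites'' of the title are these two overlaid random graphs, not two rounds of sprinkling. If you want to pursue a lower bound without the nibble, this product-plus-subsampling idea is the step your proposal is missing.
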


A conjecture by Davies, Jenssen, Perkins, and Roberts~\cite{Davies18} relating maximum and average size of independence sets in triangle-free graphs would imply the upper bound in Conjecture~\ref{con1/2}. In this paper, we prove the lower bound. 

\begin{theorem}\label{main2}
    \[
R(3,k)\ge  \left(\frac12+o(1)\right)\frac{k^2}{\log{k}}.
\]
\end{theorem}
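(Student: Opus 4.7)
The plan is to exhibit a triangle-free graph $H$ on $n=\lfloor(\tfrac12-\varepsilon)k^2/\log k\rfloor$ vertices with $\alpha(H)<k$ with high probability, built from two independent large random samples (two ``bites'') rather than a long sequence of small ones (a ``nibble''). First I would fix two parameters $p_1,p_2$ of order $\sqrt{\log n/n}$ chosen so that the expected total density is just below the Shearer threshold (the edge density $d$ at which the triangle-free bound $\alpha\gtrsim n\log d/d$ saturates at $k$). The first bite samples $G_1\sim G(n,p_1)$ and cleans it to a triangle-free graph $H_1$ by deleting one edge from each triangle using a deterministic tie-breaking rule; since $p_1$ is small, only $o(|E(G_1)|)$ edges are lost. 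The second bite, conditional on $H_1$, samples each pair $\{u,v\}\notin H_1$ that shares no common neighbour in $H_1$ independently with probability $p_2$ to obtain $G_2$, and finally we clean any remaining triangles (which must involve at least one $G_2$-edge) to produce $H\supseteq H_1$. By construction $H$ is triangle-free.

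To bound $\alpha(H)<k$, I would union bound over the $\binom{n}{k}$ potential $k$-sets $S$ and split $\Pr[S\text{ independent in }H]\le\Pr[S\text{ independent in }H_1]\cdot\Pr[G_2\cap\binom{S}{2}=\emptyset\mid H_1]$. The first factor is a standard binomial computation, bounded by roughly $(1-p_1)^{\binom{k}{2}}$. The second factor, conditional on $S$ surviving the first bite, should be bounded by $(1-p_2)^{|F\cap\binom{S}{2}|}$, where $F=F(H_1)$ is the set of admissible pairs. Provided $|F\cap\binom{S}{2}|$ is close to $\binom{k}{2}$ for every candidate $S$, multiplying the two estimates should beat $\binom{n}{k}\approx e^{k\log(en/k)}$ precisely when $n$ drops below $(\tfrac12-\varepsilon)k^2/\log k$; the sharp constant $\tfrac12$ emerges because the two bites together inject \emph{exactly} the amount of density that saturates Shearer without making the cleanup step lossy.

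The main obstacle is the pseudo-random estimate that for every candidate $k$-set $S$, the number of admissible pairs $|F\cap\binom{S}{2}|$ concentrates around its expectation. The triangle-cleanup in the first bite breaks the independence between pairs inside $S$, so a plain Chernoff bound does not apply; I expect the heart of the argument to be a Kim--Vu style polynomial concentration inequality for cherries (paths of length two) in $G_1$ restricted to candidate $S$, combined with a careful conditional analysis showing that no $S$ has anomalously few $F$-pairs after cleanup. A secondary difficulty is handling the triangles created by the second bite: one must show that a $G_2$-$G_2$-$H_1$ or $G_2$-$G_2$-$G_2$ triangle forces an edge whose removal does not help a would-be independent $S$ survive, so that the cleanup step in phase two does not introduce a large ``safe'' independent set through the back door.
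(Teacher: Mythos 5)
Your first bite already fails at the claimed density. With $p_1 = c_1\sqrt{\log n/n}$, the graph $G(n,p_1)$ has about $\binom{n}{3}p_1^3 = \Theta\bigl(n^{3/2}(\log n)^{3/2}\bigr)$ triangles but only about $\binom{n}{2}p_1 = \Theta\bigl(n^{3/2}(\log n)^{1/2}\bigr)$ edges, so there are $\Theta(\log n)$ triangles per edge on average. Deleting one edge per triangle --- indeed, passing to \emph{any} triangle-free subgraph --- destroys a constant fraction of the edges, not the $o(|E(G_1)|)$ you assert; the cheap-deletion regime needs $p_1 = o(n^{-1/2})$, a full $\sqrt{\log n}$ factor below your target. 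Moreover, at density $\sqrt{\log n/n}$ a typical pair has $\sim np_1^2 = \Theta(\log n)$ common neighbours, so only an $n^{-\Theta(1)}$ fraction of pairs would be admissible for your second bite and $G_2$ contributes negligibly many edges. Shrinking $p_1$ does not rescue the plan: if $p_1 = o(n^{-1/2})$, then almost every pair is admissible, and your second bite runs into the identical triangle-count wall, since it must supply essentially all of the density itself. This wall is precisely why Kim and the triangle-free-process analyses used a genuine nibble consisting of many tiny steps, and it is why your concluding worry about Kim--Vu concentration never even gets a chance to arise.

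The paper's ``two bites'' are a different idea entirely, not a two-step nibble. Two independent copies $G_R, G_B$ of $G(N,p)$ are sampled on a \emph{much smaller} vertex set $N = n/\log^2 n$ with $p = \tfrac12\sqrt{\log n/n}$, so that $p^2 N = O(1/\log n) = o(1)$ and an explicit colour-and-ordering deletion rule makes the co-normal product $G_R * G_B$ on $N^2$ vertices triangle-free while destroying only an $O(1/\log n)$ fraction of edges, preserving density $\approx 2p = \sqrt{\log n/n}$. The product (a blow-up) necessarily inflates independent sets; the final move, taking a uniformly random $n$-vertex induced subgraph of the product, is what destroys those structured large independent sets, since the random overlay of the two blown-up copies scrambles each other's fibers. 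The independence bound is then a union bound over $k$-sets built on a classification of ``open'' versus ``closed'' pairs with a conditional revelation argument, not polynomial concentration. Both the vertex-set reduction $N = n/\log^2 n$ and the co-normal-product blow-up are missing from your proposal, and without them the analysis cannot reach the target density.
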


Similarly to all other previous proofs of lower bounds, we prove the following theorem concerning independence numbers, which easily implies Theorem~\ref{main2}.

\begin{theorem}\label{main}
    For all $\varepsilon > 0$, there exists an $n_0=n_0(\varepsilon)$ so that for $n \geq n_0$ there exists a triangle-free graph $G$ on $n$ vertices with independence number 
    \[
\alpha(G) < (1+\varepsilon)\sqrt{n \log n}.
\]
\end{theorem}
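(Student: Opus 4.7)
The plan is to construct $G$ as the union $G = G_1 \cup G_2$ of two independently sampled triangle-free subgraphs, corresponding to the two ``bites'' in the title; this contrasts with nibble arguments, which require $\Omega(n^{3/2}\sqrt{\log n})$ sequential steps. For the first bite, I would sample a random triangle-free graph $G_1$ of average degree $d_1 = c_1 \sqrt{n\log n}$ for some fixed $c_1\in(0,1)$, for instance by taking $G(n,p_1)$ and deleting one edge from each triangle, or by taking a random regular triangle-free graph of the right degree. For the second bite, let
\[
\mathcal{E}^\star = \l\{\{u,v\}\notin E(G_1):\ N_{G_1}(u)\cap N_{G_1}(v)=\emptyset\r\}
\]
be the set of non-edges of $G_1$ whose inclusion would close no triangle against two $G_1$-edges. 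I would include each element of $\mathcal{E}^\star$ in $G_2$ independently with probability $p_2$, tuned so that $d_1 + p_2|\mathcal{E}^\star|/n$ reaches the target $\sqrt{n\log n}$.

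Triangle-freeness of $G_1\cup G_2$ reduces, by construction of $\mathcal{E}^\star$, to ruling out triangles with two or three edges in $G_2$. In either case the configuration consists of two $\mathcal{E}^\star$-pairs sharing a vertex whose opposite endpoints are joined by an edge of $G_1\cup G_2$, and a first-moment calculation using the approximate regularity of $G_1$ and the smallness of $p_2$ should show that whp no such configurations are sampled.

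The main step is to show that the independence number is less than $s:=(1+\varepsilon)\sqrt{n\log n}$. Fix $S\in\binom{V}{s}$; the expected number of edges of $G$ inside $S$ is
\[
\Ex\!\l|E(G_1)\cap \tbinom{S}{2}\r| + p_2\,\Ex\!\l|\mathcal{E}^\star\cap \tbinom{S}{2}\r|,
\]
and a Chernoff bound should give that $S$ is independent with probability $\exp(-\Omega(s^2(d_1/n+p_2)))$, which must beat the entropy $\binom{n}{s}\le\exp(s\log(en/s))$ in the union bound. The delicate point, and what I expect to be the main obstacle, is that $|\mathcal{E}^\star\cap\binom{S}{2}|$ is itself a random variable depending on $G_1$; it must be concentrated near its mean \emph{uniformly over all $s$-subsets} $S$ before the second-bite Chernoff bound can be applied. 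This should follow from sharp control of the codegree distribution of $G_1$, which is standard for sufficiently pseudorandom $G_1$ but will require care to get the implied constant. Optimizing $p_1$ and $p_2$ subject to the triangle-freeness and union-bound constraints should then pin down the constant in front of $\sqrt{n\log n}$, yielding the $\tfrac12$ of Conjecture~\ref{con1/2}.
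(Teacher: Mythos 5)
Your proposal is a genuinely different approach from the paper's and, as written, contains a gap that does not look repairable. The paper does \emph{not} build $G$ as a union of two triangle-free graphs on the same $n$ vertices. Instead it samples two copies of $G(N,p)$ on $N = n/\log^2 n$ vertices (where $p^2N = o(1)$, so the base graphs are \emph{below} the edge-deletion threshold), forms their co-normal product $G_R * G_B$ on $N^2$ vertices, deletes edges by an explicit combinatorial rule so that every triangle of the product loses an edge, and only then passes to a uniformly random $n$-vertex induced subgraph. Triangle-freeness is thus purely structural, not probabilistic, and the independent-set analysis is built entirely around the product structure (fibers, projections to $V_R$ and $V_B$, open versus closed pairs), not around a generic codegree count.

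The fatal step in the union plan is the one you treat as a formality: ruling out triangles with two or three $G_2$-edges. If $G_1$ is pseudorandom of density $p_1 = c_1\sqrt{\log n / n}$, then a pair is open with probability roughly $(1-p_1^2)^n \approx n^{-c_1^2}$, so $|\mathcal{E}^\star| \approx n^{2-c_1^2}$, and to make $G_2$ contribute density comparable to $G_1$ you need $p_2 \approx c_1 n^{c_1^2 - 1/2}\sqrt{\log n}$. The number of triples $\{u,v,w\}$ with $vw \in E(G_1)$ and both $uv,uw\in\mathcal{E}^\star$ is of order $n^{5/2-2c_1^2}\sqrt{\log n}$, so the expected number of these that become triangles in $G_1 \cup G_2$ is of order $p_2^2 \cdot n^{5/2-2c_1^2}\sqrt{\log n} = \Theta\big(n^{3/2}(\log n)^{3/2}\big)$, which is a factor of $\log n$ \emph{larger} than $|E(G_2)| = \Theta\big(n^{3/2}\sqrt{\log n}\big)$. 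The second bite therefore creates more triangles than it has edges, and no clean-up deletion can save it. This is exactly the obstruction that forces the nibble in the first place: one cannot add a constant fraction of the final edges in a single batch without flooding the graph with triangles, and it is precisely what the co-normal product is engineered to sidestep. There are secondary problems too — at $p_1 = \Theta(\sqrt{\log n / n})$ the graph $G(n,p_1)$ already has $\Theta(\log n)$ times more triangles than edges, so edge deletion cannot produce your $G_1$, while taking $G_1$ from the triangle-free process at its terminal density makes $\mathcal{E}^\star$ essentially empty — but the triangle count above is the core issue. The concentration problem you flagged as the main obstacle is real but downstream; the construction would already have collapsed before you reach it.
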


\noindent Spencer states in the epilogue of his book chapter:

\vspace{0.2cm}
{\em 
``Is the story of $R(3,k)$ over? I think not. I think there is plenty of room for a consolidation of the results. My dream is a ten-page paper which gives $R(3,k)=\Theta(k^2/\log{k})$."
}

\vspace{0.2cm}
While inspired by ideas in~\cite{campos2025}, our proof of Theorem~\ref{main} does not require a nibble, and thus we are able to circumvent many of the technical difficulties faced by previous groups.
If one were interested only in the correct order of magnitude, our construction could achieve Spencer's dream, by combining slightly weaker ingredients with Shearer's one page proof of the upper bound.

In Section \ref{sec:construction} we present our random construction, which is quite versatile and has already seen applications to related problems. In Section~\ref{sec:hyper} we  give a direct application to hypergraph Ramsey numbers of stars, suggested to us by Mubayi.  Campos, Jenssen, Michelen and Sahasrabudhe together with the fourth author~\cite{campos2025polynomialimprovementoddcyclecomplete} have built on our method to find the first polynomial improvements over the edge deletion threshold 
for the cycle-complete Ramsey numbers $r(C_{\ell},K_k)$, for all odd $\ell\ge 9$ as~$k$ tends to infinity. This shows some flexibility in the choice of an excluded graph $H$ beyond triangles.
Most recently,
K\"uhn, Sauermann, Steiner and Wigderson~\cite{kühn2025disproofoddhadwigerconjecture} have disproved the odd Hadwiger conjecture using our construction. Here, triangles are avoided the same way as in our application, guaranteeing a large chromatic number of the complement. Instead of avoiding large cliques in the complement, the authors show that one can avoid large odd pairings (see their paper for a definition), another pseudorandom property.

\section{The Construction of $G$}\label{sec:construction}

Before formally presenting our construction witnessing Theorem~\ref{main}, we provide some heuristic justification. 
The edge deletion method is a typical starting point in modern combinatorics when one wishes to construct a large graph~$G$ which avoids a forbidden subgraph $H$, but carries certain other properties held by the random graph (in our application,~$G$ should have no large independent sets).
Implementing the method first requires one  to calculate the edge deletion threshold $p_H$, so that (in expectation) the number of copies of $H$ in the random graph $G(n,p_H)$ is of the same order as $p_Hn^2$, the expected number of edges. If~$p\ll p_H$ and one samples~$G$ from $G(n,p)$ before deleting an edge from every copy of $H$, only~$o(pn^2)$ edges are lost.

Some further effort can show that the resulting graph will have small independence number (similar to $G(n,p)$) and many other pseudorandom properties.
For a number of problems, finding constructions which improve upon this deletion method is of great interest.
To outperform such examples, one may try to construct an $H$-free graph with edge density larger than~$p_H$, while still being careful to obtain some of the pseudorandomness that the random graph enjoys.

For triangles, the edge deletion threshold is of order $p_{K_3}=\sqrt{\frac1n}$.
Most recently, Campos, Jenssen, Michelen and Sahasrabudhe~\cite{campos2025} constructed a triangle-free graph with edge density $(1+o(1))\sqrt{\frac{2\log n}{3n}}$. To do so, they let~${m}= n / \log^2(n)$ and~$p = \sqrt{\frac{\log(n)}{6 n}}$ before sampling a graph $G_1$ from~$G({m},p)$. Noting that~$ p = o\left(\sqrt{\frac1{{m}}}\right)$, they can delete few edges to make the resulting graph (still on~${m}$ vertices) triangle-free, before blowing up each vertex by a factor $\log^2(n)$ to obtain a new graph on $n$ vertices, still triangle-free and with edge density $\sqrt{\frac{\log{n}}{6n}}$, well above the edge deletion threshold.
Unfortunately the blow-up operation also inflates independent sets. In~\cite{campos2025} it is shown that if one then runs a tailored nibble (loosely speaking, a nibble with an added regularization step), one will eventually double the density of the blow-up graph and destroy the (structured and rare) large independent sets created by the blow-up. 

The main idea in our construction is to replace the nibble in the second stage of~\cite{campos2025} by a second copy of a blow-up of the smaller random graph. 
We recover the necessary pseudorandom properties by randomly overlaying the two blow-ups; each copy of the blow-up destroys the large independent sets in the other. While there may be many resulting triangles from the overlay, they come in bunches intersecting in a single edge, and they can all be efficiently removed via the deletion of few edges. 

We learned that a similar idea of stacking triangle-free graphs to reduce the independence number was used before by Alon and R\"odl~\cite{Alon05} where they find lower bounds for multi-color Ramsey numbers $R(3,\ldots,3,k)$ by combining several copies of a Ramsey graph for $R(3,k)$. While the effect on the independence number is similar, they do not have to worry about triangles created from multiple colors.

In any graph~$G$ witnessing a lower bound to~$R(3,k)$, the neighborhood of a vertex~$v$ is necessarily an independent set.
The construction in \cite{campos2025} has independent sets asymptotically $\frac32$ times the size of the average vertex degree.
In our construction, independence number and average vertex degree asymptotically agree, and they are the same as a random graph of the same density. Any construction improving on the constant $\frac12$ would have to have lower density, and at the same time independence number smaller than the random graph of that density.

Let us now formally define our construction of the desired  $n$ vertex graph $G$. For the sake of readability, we omit floors and ceilings whenever they are inconsequential. All of our logarithms are natural with base $e$. For some (to be specified) values of $p \in (0,1)$ and $s > 1$, let ${m}=\frac{n}{s}$. 
\begin{enumerate}
    \item Sample two (independent) copies of the random graph $G({m},p)$, $G_{R}$ and $G_{B}$ (we will call these the red and the blue graph) on vertex sets $V_R=V(G_R)=\{r_1,r_2,\ldots,r_{{m}} \}$ and $V_B=V(G_B)=\{b_1,b_2,\ldots,b_{{m}} \}$, respectively.
    \item Let $V(G)=\{v_1,v_2\ldots,v_n\}$, and choose a function 
    $\pi:V(G)\to V_R\times V_B$
    \label{subgraphchoice} uniformly at random from all such injective maps. 
    See below for a discussion of this choice.
    \item Define functions $\pi_R:V(G)\to V_R$, $\pi_B:V(G)\to V_B$ by $(\pi_R(v_i),\pi_B(v_i))=\pi(v_i)$ for all $i$. For any set $X\subseteq V(G)$, define the sets $\pi(X),\pi_R(X),\pi_B(X)$ as the images of the sets under these maps. We say that $\pi_R(X)$ and $\pi_B(X)$ are the projections of $\pi(X)$.
    \item Define an edge set $\tilde{E}(G)=\tilde{E}_R(G)\cup \tilde{E}_B(G)$, where
    \begin{align*}
        \tilde{E}_R(G)&=\{v_iv_j:\pi_R(v_i)\pi_R(v_j)\in E(G_R)\},\\
        \tilde{E}_B(G)&=\{v_iv_j:\pi_B(v_i)\pi_B(v_j)\in E(G_B)\}.
    \end{align*}
    If an edge is in $\tilde{E}_R(G)\cap \tilde{E}_B(G)$, we keep both edges, so $\tilde{E}(G)$ is a multigraph.
    Edges in $\tilde{E}_R(G)$ will be colored red, edges in $\tilde{E}_B(G)$ will be colored blue. 
    \item Order all pairs $r_ir_j\in {V_R\choose 2}$ lexicographically, i.e. first by the smaller index and then by the larger index. Similarly, order all pairs $b_ib_j\in {V_B\choose 2}$.
    \item To arrive at $E(G)\subseteq \tilde{E}(G)$, we delete at least one edge from every triangle in $\tilde{E}(G)$. \label{edgedelete}
    \begin{enumerate}
        \item From every red  triangle $v_iv_jv_\ell\subseteq\tilde{E}_R(G)$, 
        with $\pi_R(v_j)\pi_R(v_\ell)$ later in the order of the pairs in $V_R$ than $\pi_R(v_i)\pi_R(v_j)$ and $\pi_R(v_i)\pi_R(v_\ell)$, remove 
        $v_jv_\ell\in \tilde{E}_R(G)$.
        \item From every blue  triangle $v_iv_jv_\ell\subseteq\tilde{E}_B(G)$ with $\pi_B(v_j)\pi_B(v_\ell)$ later in the order of the pairs in $V_B$ than $\pi_B(v_i)\pi_B(v_j)$ and $\pi_B(v_i)\pi_B(v_\ell)$, remove 
        $v_jv_\ell\in \tilde{E}_B(G)$.
        \item From every triangle $v_iv_jv_\ell$ with red edges $v_iv_j,v_iv_\ell\in\tilde{E}_R(G)$, and blue edge $v_jv_\ell\in\tilde{E}_B(G)$, remove 
        $v_jv_\ell\in\tilde{E}_B(G)$.
        \item From every triangle $v_iv_jv_\ell$ with blue edges $v_iv_j,v_iv_\ell\in\tilde{E}_B(G)$, and red edge $v_jv_\ell\in\tilde{E}_R(G)$, remove 
        $v_jv_\ell\in\tilde{E}_R(G)$.
    \end{enumerate}
    \item For our analysis, it is easier to work with this multigraph instead of the simple graph we obtain when we reduce every double edge to a single edge. As the independence number of the graph and the multigraph is the same, we will work with the multigraph.
\end{enumerate}

The resulting multigraph on $E(G)=E_R(G)\cup E_B(G)$ is triangle-free and has density $(2+o(1))p$ for an appropriate choice of $\pi$, $s$ and $p$. Note that if every vertex in $V_R\cup V_B$ appears as $\pi_R(v_i)$ or $\pi_B(v_i)$  $(1+o(1))s$ times, then each of the edges removed in step~\ref{edgedelete} lies between the end vertices of at least $(1+o(1))s$ monochromatic paths of length $2$, and therefore every edge removal destroys at least $(1+o(1))s$ triangles in this process. This factor $s$ represents exactly the gain in efficiency of this construction when compared with the typical edge deletion method. A larger value of $s$ yields a larger gain, but the resulting graph is also further from a random graph.

There are many different ways to choose the function $\pi$ in step~\ref{subgraphchoice}. Ultimately, it will be important that $\pi$ has certain expansion properties.
In this paper, we like to think of $G$ as a random subgraph of the co-normal product of $G_R$ and $G_B$ on $V_R\times V_B$ to make the notion of a projection more natural, and we choose $\pi:V(G)\to V_R\times V_B$ as an injective function uniformly at random. In~\cite{kühn2025disproofoddhadwigerconjecture}, the authors choose each $\pi(v_i)$ independently uniformly at random, allowing repetition. In~\cite{campos2025polynomialimprovementoddcyclecomplete}, the authors create the random union of balanced $s$-blow-ups of $G_R$ and $G_B$ alluded to in the heuristic description of our construction by choosing two random balanced partitions of the vertex set, which corresponds to choosing $\pi(v_i)=(\pi_R(v_i),\pi_B(v_i))$ uniformly at random allowing repetition conditioned on each $r_i$ and $b_i$ being chosen exactly $s$ times by $\pi_R$ and $\pi_B$, respectively. Each of these choices have only marginal effects on the construction and the following analysis, and are largely interchangeable in each of the three applications.

For some applications it may be useful to replace the random choice in step~\ref{subgraphchoice} by an algebraic or geometric construction, e.g. the set of incidences of a projective plane. Overall, the construction is very flexible in the choices of $\pi$, $p$ and $s$ as long as one finds a variant of step~\ref{edgedelete} to efficiently destroy all copies of a given $H$.

We now define the parameters in the construction for our application to $R(3,k)$.
For any~$n$ and~$\varepsilon$, set
\begin{align*}
	s &= \log^2(n),&
    {m} &= \frac{n}{s},&
	p &= \beta \sqrt{ \frac{\log n}{n}},&
	k &= \kappa \sqrt{n \log n},
\end{align*}    
where ultimately
$\beta = \frac{1}{2}$ and $\kappa = 1 + \varepsilon$.

In the course of analyzing the construction we will utilize the parameter hierarchy
\[
0<  \varepsilon_2 \ll \varepsilon_1 \ll \varepsilon \ll 1,
\]
and in addition, $n_0 =n_0(\varepsilon_2)\gg \frac1{\varepsilon_2}$ is taken 
to be sufficiently large so that certain inequalities hold for all $n > n_0$. We will always treat $\varepsilon,\varepsilon_1,\varepsilon_2$ as fixed, and $o(1)$ notation refers to the asymptotics as $n$ grows.

In Section~\ref{sec:prelim}, we will use~$\varepsilon_2$ in Lemma~\ref{lem:fiber_and_degree} to quantify routine concentration results in our construction, and~$\varepsilon_1$  in Lemmas~\ref{lem:large},~\ref{lem:med},~\ref{lem:small}, and~\ref{lem:huge}  
to control the quantity and location of the removed edges in $\tilde{E}(G)\setminus E(G)$.

Finally in Section~\ref{sec:independence} we will show that~$\alpha(G) < (1+\varepsilon)\sqrt{n \log n}$ with high probability. For this, we will look at the projections of a given $k$-set onto $G_R$ and $G_B$. If we disregard the deleted edges, a $k$-set in $G$ is independent if and only if both the projections in $G_R$ and $G_B$ are independent. Thus, a $k$-set with large projections is less likely to be independent than a set with small projections. On the other hand, there are many more $k$-sets with large projections than with small projections. A careful balance of these counts and probabilities together with the information gathered on the removed edges in Section~\ref{sec:prelim} then enables us to prove the desired bound.
 
Throughout, we use the standard notation $N(v)$ for the neighborhood of a vertex in a graph, where the graph to consider is often implied by the choice of the vertex $v$.

\section{Controlling the edge deletions} \label{sec:prelim}

In this section, we introduce some notation and prove several structural lemmas that enable us to analyze the $k$-sets in $G$ and help us to control the deleted edges in $\tilde{E}(G)\setminus E(G)$. For $r_i \in V_R$ and $b_i \in V_B$ let
\begin{align*}
    F(r_i) &= \pi_R^{-1}(r_i),
    &F(b_i) &= \pi_B^{-1}(b_i)
\end{align*}   
denote the respective {\em fibers} in~$V(G)$. 

The edges in $\tilde{E}_R$ incident to a vertex $v_i\in V(G)$ are independent of $\pi_B$ (and symmetrically for $\tilde{E}_B$ and $\pi_R$). In fact, if $\pi_R(v_i)=\pi_R(v_j)$, then the red neighborhoods under $\tilde{E}_R$ of $v_i$ and $v_j$ are identical. We can describe red and blue neighborhoods in $\tilde{E}$ as follows (where the $N_v^+$ will play a role when we consider edge deletions in monochromatic triangles):
\begin{align*}
    N_{r_i}&=\bigcup_{r_j\in N(r_i)}F(r_j),
    &N_{r_i}^+&=\bigcup_{j>i,r_j\in N(r_i)}F(r_j),\\
    N_{b_i}&=\bigcup_{b_j\in N(b_i)}F(b_j),
    &N_{b_i}^+&=\bigcup_{j>i,b_j\in N(b_i)}F(b_j).
\end{align*}

The following routine lemma shows that the degrees and neighborhoods in the model behave as expected with high probability. 
\begin{lemma}\label{lem:fiber_and_degree}
 Let $C=100$ 
 and~$\mathcal{D}$ be the event that the following hold for all distinct~$v,w \in (V_R \cup V_B)$.
    \begin{align} 
    \big| |F(v)| - \log^2 (n) \big| &\leq \varepsilon_2 \log^2 (n)
    \label{eqn:fiber}\\
    \big| |N(v)| - p{m} \big| & \leq \varepsilon_2 p{m}\label{deg:redblue} \\ 
	|N(v) \cap N(w)| & \leq C \log n \mbox{ ~if $v, w \in V_R$ or $v, w \in V_B$} \label{deg:codeg}\\   
    \big| |N_v|-pn \big| &\leq \varepsilon_2 pn \label{deg:eq1}\\
	|N_v \cap N_w| &\leq C \log^3 (n)  \label{deg:eq2}\\
	|\pi_B(N_v) \cap \pi_B(N_w)| &\leq 2C\log^3 (n) \mbox{ ~if $v, w \in V_R$} \label{deg:projcodegblue}
\\	|\pi_R(N_v) \cap \pi_R(N_w)| &\leq 2C \log^3 (n) \mbox{ ~if $v, w \in V_B$} \label{deg:projcodegred}
    \end{align}
    Then $\p(\mathcal{D})=1-o(1)$.
\end{lemma}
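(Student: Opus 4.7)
The plan is to verify each of the seven inequalities with failure probability $o(N^{-2})$ and then take a union bound over the $O(N^2)$ relevant pairs $(v,w)$. Every claim carries substantial quantitative slack---for instance the expected codegree in (\ref{deg:codeg}) is only $(1+o(1))\beta^2/\log n$ against a target of $C\log n$---so I expect only standard concentration tools to be needed: Chernoff--Hoeffding for binomial and hypergeometric sums, the first-moment tail $\Pr[X \ge t] \le \binom{m}{t} q^t$ for sums of $m$ Bernoulli$(q)$ indicators, and (for the projection items) a transfer from the random injection $\varphi$ to the binomial inclusion model.

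Items (\ref{eqn:fiber})--(\ref{deg:codeg}) are immediate. The fiber $|F(r_i)|$ is hypergeometric with parameters $(N^2, N, n)$ and mean $n/N = \log^2 n$, yielding a Chernoff tail $\exp(-\Omega(\varepsilon_2^2 \log^2 n))$. The degree $|N(r_i)|$ is $\mathrm{Binomial}(N-1, p)$ with mean $\Theta(pN) \to \infty$, so Chernoff gives $\exp(-\Omega(\varepsilon_2^2 pN))$. The codegree satisfies
\[
\Pr[|N(v) \cap N(w)| \ge C \log n] \le \binom{N}{C \log n} p^{2C \log n} \le \left(\frac{eNp^2}{C\log n}\right)^{\!C \log n} = n^{-\omega(1)},
\]
and the same calculation in fact sharpens (\ref{deg:codeg}) to $|N(v) \cap N(w)| = O(\log n / \log\log n) = o(\log n)$, which will be convenient below.

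Items (\ref{deg:eq1}) and (\ref{deg:eq2}) reduce to hypergeometric concentration on $\varphi$ after conditioning on $G_R \cup G_B$. For (\ref{deg:eq1}) with $v = r_i$, we have $N_3(v) = \varphi(V(G)) \cap (N(r_i) \times V_B)$, which is the image count in a set of size $(1\pm\varepsilon_2) p N^2$ by (\ref{deg:redblue}); thus $|N_3(v)|$ is hypergeometric with mean $(1\pm\varepsilon_2)pn$, and Chernoff closes the bound. For (\ref{deg:eq2}), the relevant set is $(N(v)\cap N(w)) \times V_B$ when $v,w$ have the same color, of size $o(\log n)\cdot N$ by the strengthened (\ref{deg:codeg}), and $N(v) \times N(w)$ when they have different colors, of size $(1+o(1))p^2N^2$ with image mean $(1+o(1))np^2 = O(\log n)$; in both cases the expected image count is $o(\log^3 n)$ and Chernoff finishes.

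The main obstacle is items (\ref{deg:projcodegblue}) and (\ref{deg:projcodegred}), since the projection is a non-linear functional of $\varphi$. For $v,w \in V_R$, I transfer to the binomial inclusion model in which each position $(r_j,b_k) \in V_R\times V_B$ is independently image with probability $q = n/N^2 = \log^4 n / n$; the usual transfer lemma (conditioning the binomial model on $\{|I|=n\}$, which has probability $\Theta(1/\sqrt{n})$) costs only a $\sqrt{n}$-factor in any upper-tail bound, easily absorbed. In this binomial model the events $E_k := \{b_k \in \pi_B(N_3(v)) \cap \pi_B(N_3(w))\}$ depend only on positions in column $k$, hence are independent across $k$. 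Setting $A = N(v)$, $B = N(w)$, $M = A\cap B$ and using $1-(1-q)^m \le mq$, a brief case split (single image in $M\times\{b_k\}$ versus two distinct images, one in $A\times\{b_k\}$ and one in $B\times\{b_k\}$) yields
\[
\Pr[E_k] \le q|M| + q^2|A||B| \le o(\log^5 n / n) + (1+o(1))\beta^2 \log^5 n / n = (1+o(1))\beta^2 \log^5 n / n,
\]
so the expected count is $N \cdot \Pr[E_k] = (1+o(1))\beta^2 \log^3 n$. Independence across $k$ and Chernoff then give upper-tail probability $\exp(-\Omega(\log^3 n))$ for exceeding $C\log^3 n = 3\sqrt{20}\log^3 n$. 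The case $v,w\in V_B$ is symmetric by swapping the roles of rows and columns, and a final union bound over all seven events and all $O(N^2)$ pairs yields $\Pr(\mathcal{D}) = 1-o(1)$.
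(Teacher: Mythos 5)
Your proposal is correct and follows essentially the same approach as the paper: transfer to the independent-inclusion (binomial) model (with the observation that conditioning on $|V(G_3)|=n$ costs only a polynomial factor), Chernoff-type concentration for each of the seven items, and columnwise independence to handle the projection codegrees $\eqref{deg:projcodegblue}$--$\eqref{deg:projcodegred}$. The only differences are minor and procedural: the paper handles $\eqref{deg:eq1}$ and the same-colour case of $\eqref{deg:eq2}$ deterministically given the earlier events (working with slightly tightened constants $\varepsilon_2'$, $C_3$) rather than by fresh concentration, and your direct estimate of $\Pr[E_k]$ by cases is a slightly cleaner route to the same bound the paper obtains via the set-theoretic decomposition $\eqref{eqn:combined.}$ and a separate deterministic/probabilistic split.
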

\begin{proof}[Proof sketch]
We only sketch the proof here, the details are left to the reader.
    We show that each statement fails with probability $o(1)$. Statements \eqref{deg:redblue} and \eqref{deg:codeg} are standard results about degrees and co-degrees in the random graphs $G_R$ and $G_B$. 
    Statements \eqref{eqn:fiber}, \eqref{deg:eq1} and \eqref{deg:eq2} follow from Chernoff bounds (Theorem~\ref{CH}) for $\pi(V(G))$ intersecting the various subsets of $V_R\times V_B$ determined by $G_R$ and $G_B$, taking the union over all choices of $v$ and $w$. Note that the choice of $\pi$ being injective only strengthens the concentration around the various means. Finally, for \eqref{deg:projcodegblue} and \eqref{deg:projcodegred}, we first use \eqref{deg:eq2} for a possible intersection, and then we use Chernoff bounds for the hypergeometric distribution for intersections of two random subsets of size  $(1+\varepsilon_2)pn$ of a set of size ${m}$.
\end{proof}

One main difficulty in the proof of Theorem~\ref{main} is to analyze the edges in $\tilde{E}(G)\setminus E(G)$ removed to make the graph triangle-free. Every such edge is between the two end-vertices of a monochromatic path of length $2$, so we study such vertex pairs in $N_v$ for $v\in V_R\cup V_B$.
Given $I \subseteq V(G)$ with $|I| = k$ and a vertex $v\in V_R\cup V_B$, we let
\[
X_{v}(I) = I \cap N_v,\hspace{1cm} X_{v}^+(I) = I \cap N_v^+.
\]   
For a set $I$, we call the sets of pairs 
\[
C(I)=\bigcup_{v \in V_R \cup V_B} \binom{X_v(I)}{2} ,\hspace{1cm} C^+(I)=\bigcup_{v \in V_R \cup V_B} \binom{X_v^+(I)}{2}
\]
`closed' and `closed$^+$' pairs\footnote{this nomenclature is inherited from the triangle-free process, where pairs are closed when we can not add an edge without creating a triangle}, while the pairs in $O(I)={\pi(I)\choose 2}\setminus C(I)$ and $O^+(I)={\pi(I)\choose 2}\setminus C^+(I)$ are `open' and `open$^+$', respectively.
Edges in $\tilde{E}(G)\setminus E(G)$ deleted due to monochromatic triangles are in $C^+(I)\subseteq C(I)$, whereas edges deleted due to $2$-colored triangles may be anywhere in $C(I)$.

Thus, we want to understand $C(I)$ and $C^+(I)$ for various $I$, which we do through a series of lemmas.
Due to the greater symmetry it is easier to consider only the larger sets $C(I)$, though, and all lemmas in this section contend with $C(I)$.

For a set $I$, we will partition the vertices of~$V_R \cup V_B$ by the size of $|X_v(I)|$.  Consider the cutoffs \begin{align*} 
t_1 = \frac{\sqrt{n \log n}}{\log \log n}, && t_2 = n^{1/4 + \varepsilon}, && t_3 = n^{2\varepsilon},
\end{align*} 
and partition~$V_R \cup V_B$ by size (huge, large, medium, small) as
\begin{align*} 
H_I &= \left\{v \in V_R \cup V_B: t_1 < |X_v(I)| \le k ~\right\},\\
L_I &= \left\{v \in V_R \cup V_B: t_2 < |X_v(I)| \leq t_1 \right\},\\
M_I &= \left\{v \in V_R \cup V_B: t_3 < |X_v(I)| \le t_2 \right\},\\
S_I &= \left\{v \in V_R \cup V_B: 0~ \le |X_v(I)| \le t_3 \right\}.
\end{align*}
The next four lemmas address the size of~$\binom{X_v(I)}{2}$ over each of the four parts.
For each of~$L_I,M_I$, and~$S_I$ the total contribution of closed pairs will be~$o(k^2)$, and we address these first. 

\begin{lemma} \label{lem:large} 
Conditioned on the event $\mathcal{D}$ defined in Lemma \ref{lem:fiber_and_degree}, for all $I \subseteq V(G)$ with $|I| = k$,
\[
\left| \bigcup_{v \in L_I} {X_{v}(I)\choose 2} \right|  \leq \varepsilon_1 k^2.  
\]
assuming $n>n_0$. 
\end{lemma}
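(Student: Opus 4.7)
The plan is to bound the union by the naive sum and then exploit the upper bound $|X_v(I)|\le t_1$ available for every $v\in L_I$. That is,
\[
\left|\bigcup_{v\in L_I}\binom{X_v(I)}{2}\right| \le \sum_{v\in L_I}\binom{|X_v(I)|}{2} \le \frac{t_1}{2}\sum_{v\in L_I}|X_v(I)|,
\]
so it suffices to control the first moment $\sum_{v\in L_I}|X_v(I)|$. By double counting and \eqref{deg:redblue}, writing $d_r=|F(r)\cap\varphi(I)|$, one has $\sum_{v\in V_R}|X_v(I)|=\sum_r d_r|N(r)|\le(1+\varepsilon_2)pNk$, with the symmetric estimate for $V_B$.

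A direct substitution shows that this first-moment estimate alone is too weak, by a factor of roughly $\sqrt n/(\log^{3/2}n\log\log n)$. To bridge the gap I would bring in the second moment
\[
\sum_{v\in V_R}|X_v(I)|^2=\sum_{r,r'}d_rd_{r'}|N(r)\cap N(r')|,
\]
where \eqref{deg:redblue} controls the diagonal term by $O(k^2)$ and the codegree bound \eqref{deg:codeg} controls the off-diagonal by $O(k^2\log n)$. Combined with the defining lower bound $|X_v(I)|>t_2$ for $v\in L_I$, Markov's inequality bounds $|L_I|$; together with the refined overlap $|X_v(I)\cap X_w(I)|\le|N_3(v)\cap N_3(w)|\le C\log^3 n$ from \eqref{deg:eq2}, an inclusion--exclusion on the union should push the estimate down to the target order.

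The main obstacle will be removing the spurious $\log n$ factor that arises from the pointwise codegree bound in the second moment: the typical codegree is $o(1)$, while \eqref{deg:codeg} only guarantees $C\log n$. I expect the argument to succeed by stratifying $L_I$ dyadically according to the value of $|X_v(I)|$ and using the projected codegree bounds \eqref{deg:projcodegblue}--\eqref{deg:projcodegred} to sharpen the overlap analysis on each stratum, so that the product $|L_I|\cdot t_1^2$ comes in only through the factor $t_1^2\approx k^2/(\log\log n)^2$, leaving plenty of room beneath $\varepsilon_1 k^2$.
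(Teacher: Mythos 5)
Your final step is correct and matches the paper's: bounding the union by $\sum_{v\in L_I}\binom{|X_v(I)|}{2}\le\tfrac{t_1}{2}\sum_{v\in L_I}|X_v(I)|$, so everything hinges on showing $\sum_{v\in L_I}|X_v(I)|=O(k)$. That is exactly what the paper proves, and it is the step where your plan breaks down.

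Your global first moment $\sum_{v\in V_R}|X_v(I)|\le(1+\varepsilon_2)pNk$ is, as you note, off by a polynomial factor. The second-moment repair does not close the gap either: under $\mathcal D$ the best pointwise codegree bound available is $|N(v)\cap N(w)|\le C\log n$, which gives $\sum_{v\in V_R}|X_v(I)|^2=O(k^2\log n)$ and hence, via Markov with the threshold $t_2=n^{1/4+\varepsilon}$, only $|L_I|=O\big(n^{1/2-2\varepsilon}\operatorname{polylog}(n)\big)$. That is far too weak: the subsequent inclusion--exclusion $\sum_{v\in L_I}|X_v(I)|\le k+\binom{|L_I|}{2}C\log^3 n$ would need $|L_I|\lesssim n^{1/4}$ to yield $O(k)$, and dyadic stratification cannot rescue this, since each stratum still inherits the $O(k^2\log n)$ second-moment bound and summing the $O(\log n)$ strata only worsens matters. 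The $\log n$ you hope to strip away is not a spurious polylog loss but a genuine obstruction to the moment route, and since the lemma is a deterministic claim for all $I$ conditional on $\mathcal D$, you cannot replace the pointwise codegree bound with a typical one.

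The idea you are missing is to stop looking at moments over all of $V_R$ and instead exploit that the sets $X_v(I)$ for $v\in L_I$ all live inside $\varphi(I)$, a set of size only $k$. Each such $X_v(I)$ has size more than $t_2$, and by \eqref{deg:eq2} any two overlap in at most $C\log^3 n$ elements, so a Bonferroni inequality applied to any $L\subseteq L_I$ gives
\[
k\ \ge\ \Big|\bigcup_{v\in L}X_v(I)\Big|\ \ge\ \sum_{v\in L}|X_v(I)|-\sum_{\{v,w\}\in\binom{L}{2}}|X_v(I)\cap X_w(I)|\ \ge\ |L|\,t_2-\binom{|L|}{2}C\log^3 n,
\]
which is contradicted at $|L|=\lfloor n^{(1-\varepsilon)/4}\rfloor$; hence $|L_I|<n^{(1-\varepsilon)/4}$. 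Plugging this back into the same inclusion--exclusion gives $\sum_{v\in L_I}|X_v(I)|\le k+\binom{|L_I|}{2}C\log^3 n=(1+o(1))k$, and then $\tfrac{t_1}{2}(1+o(1))k=O\!\left(\frac{k^2}{\log\log n}\right)=o(k^2)$, which is what you need. Note the margin here is only $\log\log n$, so there is essentially no room to absorb the extra $\log n$ your approach produces.
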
 
\begin{proof}
	Suppose $\mathcal{D}$ holds and fix an $I \subseteq V(G)$ with $|I| = k$. Consider any subset $L\subseteq L_I$. By inclusion-exclusion and $\eqref{deg:eq2}$ of $\mathcal{D},$
	\[
	k = |I| \geq \sum_{v \in L} |X_v(I)| - \sum_{\{v,w\} \in {L\choose 2}} |X_v(I) \cap X_{w}(I)| \geq |L|n^{1/4+\varepsilon} - \binom{|L|}{2}C\log^3(n). 
	\]
	This would yield a contradiction if $|L|=\lfloor n^{\frac{1-\varepsilon}{4}}\rfloor$.  From here, we observe that ~$|L_I| < n^{\frac{1-\varepsilon}{4}}$, and in turn that~$\sum_{v \in L_I} |X_v(I)| \leq (1+o(1))k.$  Thus,
    \[
	\sum_{v \in L_I} \binom{ |X_v(I)|}{2} \leq \sum_{v \in L_I} \frac{|X_v(I)| t_1}{2} = \frac{t_1}{2}\sum_{v \in L_I} |X_v(I)| = o(k^2),
	\] 
and the result follows for $n>n_0$. 
\end{proof}

Let us now look at the sets $X_v(I)$ of the next smaller size -- those with $v \in M_I$. 
The following is a weaker version of a lemma in~\cite{campos2025}, showing that the contribution of closed pairs from~$M_I$ is~$o(k^2)$, with high probability.

\begin{lemma} \label{lem:med} 
	Let $\mathcal{M}$ be the event that for all $I \subseteq V(G)$ with $|I|=k$,
	\begin{equation}
	\left| \bigcup_{v \in M_I} {X_{v}(I)\choose 2} \right|  \leq \varepsilon_1 k^2.  \label{eq:medcount}
	\end{equation} 
	Then $\p(\mathcal{M}) = 1-o(1).$
\end{lemma}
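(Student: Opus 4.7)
I would prove Lemma~\ref{lem:med} via a first-moment argument, conditioned on the high-probability event $\mathcal{D}$ from Lemma~\ref{lem:fiber_and_degree}. Fix an arbitrary $k$-subset $I \subseteq V(G)$. Since $\varphi$ is a uniform random injection, after conditioning on $V(G_3)$ the image $\varphi(I)$ may be viewed as a uniform random $k$-subset of $V_R \times V_B$, so for each $v \in V_R \cup V_B$ the random variable $|X_v(I)|$ is approximately hypergeometric with mean $\mu = |X_v|\,k/N^2 = (1+o(1))kp = \Theta(\log n)$. The standard Chernoff-type tail $\P(|X_v(I)| \geq s) \leq (e\mu/s)^s$, evaluated at $s \geq t_3 = n^{2\varepsilon}$, gives $\P(|X_v(I)| \geq t_3) \leq \exp(-\Omega(n^{2\varepsilon}\log n))$ since $t_3 \gg \mu$.

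\textbf{Dyadic decomposition.} For each integer $j$ with $2^j \in [t_3, t_2]$, let $Y_j(I) = |\{v : |X_v(I)| \geq 2^j\}|$. Then
\[
\sum_{v \in M_I}\binom{|X_v(I)|}{2} \leq 2\sum_{j} Y_j(I)\cdot 2^{2j},
\]
and $\E[Y_j(I)] \leq 2N(e\mu/2^j)^{2^j}$. Summing over the $O(\log n)$ dyadic levels, the expected $M_I$-energy for a single $I$ is bounded by $\exp(-\Omega(n^{2\varepsilon}\log n))$; by Markov, the probability that this $I$ is ``bad'' (i.e., has $M_I$-energy exceeding $\varepsilon_1 k^2$) is no larger.

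\textbf{Union over $I$.} To conclude, I would switch the order of summation and bound the expected count of bad $I$ by a configuration count: for each $v \in V_R \cup V_B$ and each $s \geq t_3$, the number of $k$-subsets $I$ with $|X_v(I)| \geq s$ is at most $\binom{|N_3(v)|}{s}\binom{n-s}{k-s}$, which by $\mathcal{D}$ and Stirling is $\binom{n}{k}(ekp/s)^s(1+o(1))$. Weighting each such configuration by $\binom{s}{2}$, summing the resulting geometric series (dominated at $s = t_3$), and dividing by $\varepsilon_1 k^2$ should yield $\E[\#\{I : I\text{ bad}\}] = o(1)$, whence $\mathcal{M}$ holds with probability $1 - o(1)$.

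The main obstacle I anticipate is precisely this final union step. The per-$I$ tail $\exp(-\Omega(n^{2\varepsilon}\log n))$ must beat $\binom{n}{k} \approx \exp(\Theta(\sqrt{n\log n}\log n))$, and the naive union bound achieves this cleanly only for $\varepsilon > 1/4$. For smaller $\varepsilon$ I would have to exploit the heavy overlap among the $k$-subsets $I$ --- for instance through a Lipschitz / concentration argument on the $M_I$-energy under single-element swaps of $I$, or by closely adapting the analogous lemma from~\cite{campos2025} --- to effectively reduce the union-bound cost down to the per-$I$ tail.
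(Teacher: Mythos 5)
Your proposal correctly identifies the central difficulty and is admirably honest about it, but the argument as written has a genuine gap --- exactly the one you flag at the end --- and the sketch does not fill it.

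The gap is fatal for small $\varepsilon$. Conditioning on $\mathcal{D}$ and using only the $\varphi$-randomness gives, as you compute, a per-vertex tail of order $\exp(-\Theta(n^{2\varepsilon}\log n))$ and hence (via Markov) a per-$I$ failure probability of the same order. But $\binom{n}{k}=\exp(\Theta(n^{1/2}\log^{3/2}n))$, so this union bound succeeds only when $\varepsilon>1/4$. Switching the order of summation to a "configuration count'' does not help; it pays the same $\binom{n}{k}$ cost per vertex $v$ and the arithmetic is unchanged. The McDiarmid/Lipschitz route you float also runs into trouble as stated: a single element of $\varphi(I)$ lies in $\Theta(pN)$ of the sets $X_v$, and each $\binom{|X_v(I)|}{2}$ can move by up to $t_2$, so the Lipschitz constant of the $M_I$-energy is about $pN\cdot t_2\approx n^{3/4+\varepsilon}/\log^{3/2}n$; with $k$ exposed coordinates the McDiarmid exponent $k t^2/(pN t_2)^2$ evaluated at $t=\Theta(k^2)$ is $o(1)$, yielding nothing.

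The paper's proof (which, as you surmise, follows \cite{campos2025}) closes the gap by aggregating the bad event across all of $M_I$ and moving the probability estimate to the edge-randomness of $G_R,G_B$ rather than the $\varphi$-randomness. If $\sum_{v\in M_I}|X_v(I)|>kn^{1/4-\varepsilon}$, a minimal witnessing set $B\subseteq M_I$ has $|B|\le 2kn^{1/4-3\varepsilon}$, and under $\mathcal{D}$ the fiber sizes force $\Omega\bigl(kn^{1/4-\varepsilon}/\log^2 n\bigr)$ edges of $G_R\cup G_B$ between $B$ and $\pi_R(I)\cup\pi_B(I)$. For a fixed superset $\tilde B\supseteq B$ of that size and a fixed $I$, Chernoff bounds the probability of this overfull bipartite subgraph by $\exp(-\Omega(n^{3/4-2\varepsilon}))$, while the union cost over all $\tilde B$ and $I$ is $\binom{n}{k}\binom{2N}{2kn^{1/4-3\varepsilon}}=\exp\bigl(o(n^{3/4-2\varepsilon})\bigr)$. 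Aggregating over $v\in B$ is precisely what upgrades your weak per-vertex tail $\exp(-n^{2\varepsilon}\log n)$ to a per-configuration tail $\exp(-n^{3/4-2\varepsilon})$ strong enough to beat the union cost for every $\varepsilon>0$; that aggregation, not the dyadic bookkeeping, is the missing ingredient in your sketch.
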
 

\begin{proof} 

We will show that in order for \eqref{eq:medcount} to fail, there would be an unusually dense subgraph of moderate size in $G$, which is very unlikely to happen.

Fix an $I \subseteq V(G)$ with $|I| = k$. Let \[
Y_I =\sum_{v \in M_I}|X_v(I)|.\]
We prove that with high probability for all $I$, $Y_I \leq kn^{1/4-\varepsilon}.$  From this the lemma immediately follows, as 
\[
\sum_{v \in M_I} \binom {|X_v(I)|}{2} \leq  \sum_{v \in M_I} |X_v(I)|t_2 = o(k^2).
\]

Suppose that $Y_I > kn^{1/4 - \varepsilon}.$  Let $B \subseteq M_I$ be minimal so that $Y = \sum_{v \in B} |X_v(I)| > kn^{1/4 - \varepsilon}$.  Note that $Y < 2kn^{1/4 - \varepsilon}$ (since each $|X_v(I)| \leq n^{1/4 + \varepsilon}$) and that $|B| \leq n^{-2\varepsilon}Y \leq  2kn^{1/4 - 3\varepsilon}.$  Let $B_R=B\cap V_R$ and $B_B=B\cap V_B$. Then $Y$ is the number of edges in the union of two bipartite graphs $\Gamma_R$ and $\Gamma_B$ on $V(\Gamma_R)=(B_R,\pi(I))$, $V(\Gamma_B)=(B_B,\pi(I))$ where 
\[
E(\Gamma_R)=\{v(w,w'):vw \in E(G_R)\},~E(\Gamma_B)=\{v(w,w'):vw' \in E(G_B)\}.
\]
Assuming that $\mathcal{D}$ of Lemma \ref{lem:fiber_and_degree} occurs, every edge $vw\in E(G_R)$ is causing at most 
$|F(w)|+|F(v)|<3\log^2(n)$ edges $v(w,w'),w(v,v') \in E(\Gamma_R)$, and similarly for every edge $v'w'\in E(G_B)$ causing edges in $E(\Gamma_B)$.  That is, the bipartite subgraphs of $G_R$ on $(B_R, \pi_R(I))$ and of $G_B$ on $(B_B, \pi_R(I))$ 
must together contain at least $\frac{2kn^{1/4 - \varepsilon}}{6\log^2(n)}$ edges in $G_R$ and $G_B$. 

Now, consider a fixed $\tilde{B} =\tilde{B}_R\cup \tilde{B}_B\subseteq (V_R \cup V_B)$ of size $2kn^{1/4-3\varepsilon}$ and $I \subseteq V(G)$ of size $k$.  Suppose $Z \sim Bin(|\tilde{B}|k,p)$.  The probability that there are at least $\frac{kn^{1/4 - \varepsilon}}{3\log^2(n)}$ edges in the union of the two bipartite subgraphs on $(\tilde{B}_R,\pi_R(I))$ in $G_R$ and on $(\tilde{B}_B,\pi_B(I))$ in $G_B$ is at most (for $n>n_0$)
\begin{align*}
\p\left(Z \geq \frac{kn^{1/4 - \varepsilon}}{3\log^2(n)}   \right) &\leq \p\left( Z \geq \e(Z) + \frac{kn^{1/4 - \varepsilon}}{6\log^2(n)} \right) \\
& \leq \exp\left( - \frac{kn^{1/4 - \varepsilon}}{12\log^2(n)}  \right)\\
& \leq \exp\left(-n^{3/4-2\varepsilon} \right),
\end{align*} 
where here we used that $\frac{kn^{1/4 - \varepsilon}}{3\log^2(n)} \geq n^\varepsilon\e(Z)\ge  3\e(Z)$, along with Corollary \ref{CHc}.  Now, observing that
\[
\binom{n}{k}\binom{2{m}}{2kn^{1/4 - 3\varepsilon}} \leq \exp\big( k \log n + 2kn^{1/4-3\varepsilon} \log n \big)= \exp\big(o(n^{3/4-2\varepsilon})\big),
\]
and that $B\subseteq \tilde{B}$ for some $\tilde B$, a union bound over all possible choices of $\tilde{B}$ and $I$ shows that with high probability none of the potential choices of $(B,\pi(I))$ yield graphs with as many edges as required above. 

Letting $\mathcal{O}$ denote the existence of such an overfull $(B,\pi(I))$, we conclude that the probability that the high probability conclusion of the lemma is violated is at most $\p(\mathcal{O}^c \cup \mathcal{D}^{c}) = o(1).$
\end{proof}

Next, we consider the sets $X_v(I)$ with $v \in S_{I}$.   
\begin{lemma} \label{lem:small} 	
	Let $\mathcal{S}$ be the event that for all $I \subseteq V(G)$ with $|I|=k$,
	\[
	\left| \bigcup_{v \in S_I} {X_{v}(I)\choose 2} \right|  \leq \varepsilon_1 k^2.  
	\]
	Then $\p(\mathcal{S}) = 1-o(1).$ 
\end{lemma}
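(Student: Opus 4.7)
The plan follows the template of Lemma~\ref{lem:med}: condition on the event $\mathcal{D}$ from Lemma~\ref{lem:fiber_and_degree}, and show that a violation of the desired bound for some $I$ forces an atypically dense bipartite subgraph in $G_R$ or $G_B$, which can be ruled out via Chernoff plus a union bound over $I$ and candidate vertex sets.

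For any fixed $I$ with $|I|=k$, the quantity to bound is at most $\sum_{v \in V_R \cup V_B}\binom{|X_v(I)|}{2}$, since $S_I \subseteq V_R \cup V_B$. A short expectation calculation---using that for a typical $\varphi$ the fibers $f_r = |F(r)\cap \varphi(I)|$ are concentrated on $\{0,1\}$ (as $k/N = o(1)$) and that $\E|X_v(I)| \approx kp = \Theta(\log n)$---gives that this expected sum is $O(n) = o(k^2)$, so the desired inequality already holds in expectation with considerable slack. The challenge is to upgrade this to a bound holding uniformly over the $\binom{n}{k} \leq \exp(\Theta(k \log n))$ choices of $I$.

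To establish such concentration, I dyadically partition $S_I$ into layers $S_I^{(j)} = \{v \in S_I : 2^j \leq |X_v(I)| < 2^{j+1}\}$ for $j = 0, 1, \dots, \lceil \log_2 t_3 \rceil$. If the bound fails, pigeonhole over the $O(\log n)$ layers forces some $j$ to satisfy $|S_I^{(j)}| \cdot 4^j \gtrsim \varepsilon_1 k^2/\log n$. Combined with the fiber bound of $\mathcal{D}$, each $v \in S_I^{(j)}$ must have at least $2^j / O(\log^2 n)$ neighbors in $\pi_R(I)$ or $\pi_B(I)$, so the bipartite subgraph of $G_R$ (or $G_B$) between an appropriate half of $S_I^{(j)}$ and $\pi_R(I)$ (resp.\ $\pi_B(I)$) contains far more edges than its expectation of $O(|S_I^{(j)}| \log n)$. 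For layers with $2^j$ polynomial in $n$, Chernoff (Corollary~\ref{CHc}) gives probability $\exp(-\Omega(|S_I^{(j)}| \cdot 2^j/\log^2 n))$ of this dense subgraph existing for a fixed $(I, B)$ pair, and union bounding over the $\binom{n}{k}\binom{2N}{|S_I^{(j)}|} \leq \exp(O((k + |S_I^{(j)}|)\log n))$ choices still yields $o(1)$, paralleling the calculation at the end of Lemma~\ref{lem:med}.

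The main obstacle is the smallest dyadic layers, where $2^j$ is only polylogarithmic in $n$ and the Chernoff bound on a dense subgraph is insufficiently sharp. Here, an alternative tool is required---most naturally, a direct concentration bound on the degree-$2$ polynomial $\sum_v |X_v(I)|^2 = \sum_{r, r'} f_r f_{r'}\, |N(r) \cap N(r')|$ in the edge indicators of $G_R, G_B$, whose expectation we have already shown to be $O(n)$. Either a polynomial concentration inequality (such as Kim-Vu) or a Bernstein-type bound on $\sum_v |X_v(I)|^2 \cdot \mathbf{1}[|X_v(I)| \leq \log^C n]$, after separately controlling $\max_v |X_v(I)|$ via Chernoff on each $|X_v(I)|$, should suffice; managing this low-layer contribution carefully is the most delicate step.
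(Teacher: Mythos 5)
Your proposal has a genuine gap, and it is not confined to the small dyadic layers you flagged: the upper bound $\left|\bigcup_{v\in S_I}\binom{X_v(I)}{2}\right|\le\sum_v\binom{|X_v(I)|}{2}$ (or $\sum_v|X_v(I)|^2=\sum_{r,r'}f_rf_{r'}|N(r)\cap N(r')|$) is itself too lossy, because of within-fiber pairs. A pair $\{u,w\}\subseteq F(r)\cap\varphi(I)$ lies in $\binom{X_v(I)}{2}$ simultaneously for every $v\in N(r)$, so the sum counts it $\approx pN$ times while the union counts it once. For the adversarial $I$ consisting of $\approx k/\log^2 n$ full fibers (so $f_r\approx\log^2 n$ for those $r$, consistent with $\mathcal D$), each of those $r$ has $\approx pN$ neighbors $v$, each of which lies in $S_I$ with $|X_v(I)|\approx\log^2 n$; the sum is then $\approx \frac{k}{\log^2 n}\cdot pN\cdot\binom{\log^2 n}{2}\approx\frac{\beta}{2\kappa}k^2=\Theta(k^2)$, far above $\varepsilon_1 k^2$. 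Your $O(n)$ expectation averages over $\varphi$ as well, which you cannot afford once you union-bound over $I$: conditional on such an $I$, the expectation of your sum is already $\Theta(k^2)$. Consequently the quantity to which you would apply Kim--Vu or Bernstein simply fails to be small uniformly in $I$, and no concentration inequality can rescue that. (Your dyadic Chernoff step for $2^j\gg\log^3 n$ is sound and parallels Lemma~\ref{lem:med}, but the statement ``the desired inequality holds in expectation with considerable slack'' and the passage to $\sum_{v\in V_R\cup V_B}$ are both incorrect for the reasons above.)

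The paper sidesteps this by bounding the union directly and treating same-fiber pairs separately. Under $\mathcal D$ there are at most $2k\log^2 n=o(k^2)$ within-fiber pairs; for an off-fiber pair $e$, the probability over $G_R,G_B$ (with $\varphi(I)$ fixed) that $e$ is closed by \emph{any} $v$ is $\le 2p^2N=\frac{2\beta^2}{\log n}$, so $\E Z\le\frac{2\beta^2}{\log n}\binom{k}{2}+2k\log^2 n=o(k^2)$ uniformly in $I$. Concentration then comes from McDiarmid rather than Chernoff on a dense subgraph: after discarding the at most $2k$ vertices of $S_I$ inside $\pi_R(I)\cup\pi_B(I)$ (which contribute at most $2k\binom{t_3}{2}\le\frac{\varepsilon_1}{2}k^2$), $Z$ is a deterministic function of the $\le 2N$ mutually independent neighborhoods $N(v)\cap(\pi_R(I)\cup\pi_B(I))$, each with Lipschitz constant $\binom{t_3}{2}\le n^{4\varepsilon}/2$; this gives a tail of $\exp\big(-\Omega(n^{1-8\varepsilon}\log^3 n)\big)$, overwhelming $\binom{n}{k}$. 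This pivot --- separating same-fiber pairs and then using a bounded-differences inequality in place of Chernoff --- is exactly what is missing from your sketch and is what handles the polylogarithmic layers that stall your approach.
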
 
\begin{proof} 
Fixing a set $I \subseteq V(G)$ with $|I|=k$ we consider
\[
	\left| \bigcup_{v \in S_I} {X_{v}(I)\choose 2} \right|  \leq \left| \bigcup_{v \in S_I \cap (\pi_R(I) \cup \pi_B(I))} \binom{X_v(I)}{2} \right| + \left| \bigcup_{v \in S_I \setminus (\pi_R(I) \cup \pi_B(I))} \binom{X_{v}(I)}{2} \right|.  
\]
Since~$|\pi_R(I) \cup \pi_B(I)| \leq 2|I| =2k$, the first term satisfies
\begin{align}\label{internalS_I}
    \left| \bigcup_{v \in S_I \cap (\pi_R(I) \cup \pi_B(I))} \binom{X_v(I)}{2} \right| \leq \binom{n^{2\varepsilon}}{2} (2k) \leq n^{4\varepsilon} k \leq \frac{\varepsilon_1}{2} k^2.  
\end{align}
The second term requires some probabilistic analysis. 
Set
\[
  Z \;:=\; \left| \bigcup_{v \in S_I \setminus (\pi_R(I) \cup \pi_B(I))} \binom{X_{v}(I)}{2} \right|.
\]
For a pair $e\in {F(w)\cap \pi(I)\choose 2}$ for some $w\in V_R\cup V_B$, $e\in {X_v(I)\choose 2}$ for any $v$ with $vw\in E(G_R)\cup E(G_B)$. Assuming $\mathcal{D}$, 
we have $|F(w)\cap \pi(I)|<2\log^2(n)$ for all $w$. As the sets ${F(w)\choose 2}$ are disjoint, and every vertex in $\pi(I)$ is in exactly two fibers,
there are fewer than $\frac{2k}{2\log^2{n}}2\log^4(n)=2k\log^2(n)$ such pairs.

For a fixed pair $e\in \binom{\pi(I)}{2}\setminus\bigcup_w {F(w)\choose 2}$, the probability that $e$ lies in the union
$\bigcup_{v} \binom{X_v(I)}{2}$ (regardless of whether or not $v\in S_I$) is at most
\[
  2\bigl(1-(1-p^2)^{m}\bigr)\;\le\; 2p^2{m} \;=\; \frac{2\beta^2}{\log n}.
\]
Whence with $\beta=\tfrac12$,
\[
  \E (Z) \leq \frac{2\beta^2}{\log n}\binom{k}{2}+2k\log^2(n)=\frac{1}{2\log n}\binom{k}{2}+2k\log^2(n).
\]

To apply McDiarmid’s inequality we will argue that~$Z$ is controlled by a family of independent random variables
\[
  Y_v := 
  \begin{cases}
      N(v) \cap (\pi_R(I) \cup \pi_B(I)),&\text{ if }v\in S_I,\\
      \emptyset,&\text{ if }v\notin S_I.
  \end{cases}
\]
indexed by~${v\in (V_R\setminus \pi_R(I))\cup (V_B\setminus \pi_B(I))}$.
For fixed $\pi$, these variables depend only on $N(v)$, which is determined by the choices of $G_R$ and $G_B$. They are mutually independent because we are only considering those where $v \not \in \pi_R(I)\cup\pi_B(I)$. As the union over such~$v$, $Z$ is a deterministic function of the~$Y_v$ and changing the value of a single such~$Y_{v}$ only alters the set $\binom{X_v(I)}{2}$ therefore changing the value of $Z$
by at most
\[
  \binom{n^{2\varepsilon}}{2} \;\le\; \frac{n^{4\varepsilon}}{2}.
\]
As there are $|S_I \setminus \pi_R(I) \cup \pi_B(I)| \leq 2{m}$ random variables in our family,   McDiarmid's Inequality (Theorem~\ref{McD}) yields, for any $t>0$,
\[
  \P\left(Z \ge \E (Z) + t\right) \leq
  \exp\left(-\,\frac{t^2}{{m}\,n^{8\varepsilon}}\right).
\]
Applying this with~$t = \frac{1}{\sqrt{\log n}}\binom{k}{2}$ and recalling~$k=\kappa\sqrt{n\log n}$, we obtain
\begin{align*}
  \P\left( Z \ge \frac{2}{\sqrt{\log n}}\binom{k}{2} \right)
  &\le \P\left( Z \ge \E(Z) + \frac{1}{\sqrt{\log n}}\binom{k}{2} \right) \\
  &\le \exp\left(-\,\frac{\bigl(\frac{1}{\sqrt{\log n}}\binom{k}{2}\bigr)^2}{{m}\,n^{8\varepsilon}}\right) \\
  &= \exp\left(-\Omega\bigl(n^{\,1-8\varepsilon}\log^{3} (n)\bigr)\right) \\
  &= o\left(\binom{n}{k}^{-1}\right).
\end{align*}
Therefore, with probability at least $1-o\left(\binom{n}{k}^{-1}\right)$,
\[
  Z \leq \frac{2}{\sqrt{\log n}}\binom{k}{2}
  \leq \frac{\varepsilon_1}{2}k^2
\]
for $n>n_0$. 
Combining this with~\eqref{internalS_I},
we find that, for a fixed $I$, the whole quantity
\[
  \left| \bigcup_{v \in S_I} {X_{v}(I)\choose 2} \right|\le \varepsilon_1 k^2
\]
with probability $1-o\left(\binom{n}{k}^{-1}\right)$.
A union bound over the $\binom{n}{k}$ choices of $I$ shows that the desired bound holds simultaneously for all~$I$.
\end{proof} 

Finally, we turn to the largest of the $X_{v}(I)$ -- those with $v \in H_I$.  Here is where the only significant contributions to the closed pairs arise. Ultimately, the independence of $I$ is determined by the behavior of $G_R$ and $G_B$ in $\pi_R(I)$ and $\pi_B(I)$, so we focus on these projected sets here.

\begin{lemma} 
If $\mathcal{D}$ of Lemma \ref{lem:fiber_and_degree} occurs, and $n>n_0$, then all $I \subseteq V(G)$ with $|I| = k$ satisfy
\begin{align} 
\sum_{v \in H_I \cap V_R} \binom{|\pi_R(X_v(I))|}{2} &\leq \varepsilon_1 k^2\label{dim8},\\
\sum_{v \in H_I \cap V_B} \binom{|\pi_B(X_v(I))|}{2} &\leq \varepsilon_1 k^2\label{dim9},\\
\sum_{v \in H_I \cap V_R} \binom{|\pi_B(X_v(I))|}{2} &\leq (1+\varepsilon_1) \min \left\{ \binom{k-|\pi_R(I)|}{2} , \binom{pn}{2}+\binom{k-|\pi_R(I)|-pn}{2}\right\}\label{dim6},\\
\sum_{v \in H_I \cap V_B} \binom{|\pi_R(X_{v}(I))|}{2} &\leq (1+\varepsilon_1)\min\left\{ \binom{k-|\pi_B(I)|}{2} , \binom{pn}{2}+\binom{k-|\pi_B(I)|-pn}{2}\right\}\label{dim7}. 
\end{align} 
\label{lem:huge} 
\end{lemma}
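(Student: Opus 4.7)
The plan is to split the four inequalities into two groups: \eqref{dim8}, \eqref{dim9}, which concern the ``same-side'' projections and reduce to codegree estimates, and \eqref{dim6}, \eqref{dim7}, which are the main claims with the $\min$-bound. In both cases, I would first derive a uniform bound on $|H_I \cap V_R|$ (and symmetrically $|H_I \cap V_B|$). Writing $f_r = |F(r) \cap \varphi(I)|$, for $v \in H_I \cap V_R$ the condition $|X_v(I)| > t_1$ together with $|N(v)| \leq (1+\varepsilon_2) pN = o(t_1)$ from \eqref{deg:redblue} forces the excess to satisfy $\sum_{r \in N(v) \cap \pi_R(I)} (f_r - 1) \geq (1-o(1))\, t_1$. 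Double-counting over $v$ and $r$, and bounding $\sum_r (f_r - 1)\, |N(r)| \leq (1+\varepsilon_2)\, pN\, (k - |\pi_R(I)|)$, yields
\[
|H_I \cap V_R| \leq (1+o(1))\,\frac{pN\,(k - |\pi_R(I)|)}{t_1}.
\]

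For \eqref{dim8} and \eqref{dim9}, I use the identity $|\pi_R(X_v(I))| = |N(v) \cap \pi_R(I)|$ (for $v \in V_R$) to rewrite the sum by pair-counting:
\[
\sum_{v \in H_I \cap V_R} \binom{|N(v) \cap \pi_R(I)|}{2} = \sum_{\{r, r'\} \subseteq \pi_R(I)} |N(r) \cap N(r') \cap H_I \cap V_R|.
\]
Each summand is at most $\min\{C \log n,\, |H_I \cap V_R|\}$, via \eqref{deg:codeg} and the $|H_I|$-bound above. Interpolating between these two regimes --- the codegree bound when $|\pi_R(I)|$ is small, and the $|H_I|$ bound when $|\pi_R(I)|$ is near $k$ so that $k - |\pi_R(I)|$ is small --- should produce the desired $\varepsilon_1 k^2$ bound. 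The analogous argument with the roles of $V_R$ and $V_B$ swapped handles \eqref{dim9}.

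For \eqref{dim6} and \eqref{dim7}, set $\sigma_v := |\pi_B(X_v(I))|$. The bound $\sigma_v \leq |N_3(v)| \leq (1+\varepsilon_2)\, pn$ from \eqref{deg:eq1} controls the maximum and produces the $\binom{pn}{2}$ term. To control $\sum_v \sigma_v$, I would leverage the projection-codegree bound \eqref{deg:projcodegblue}, $|\pi_B(N_3(v)) \cap \pi_B(N_3(v'))| \leq C \log^3 n$ for distinct $v, v' \in V_R$, combined with Bonferroni:
\[
\sum_{v \in H_I \cap V_R} \sigma_v \leq \Bigl|\bigcup_{v \in H_I \cap V_R} \pi_B(X_v(I))\Bigr| + \binom{|H_I \cap V_R|}{2}\, C \log^3 n,
\]
where the first term is at most $|\pi_B(I)| \leq k$ and the second is controlled by the $|H_I \cap V_R|$ estimate. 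Combined with the max bound $\sigma_v \leq (1+\varepsilon_2)\, pn$, convexity of $\binom{\cdot}{2}$ forces $\sum \binom{\sigma_v}{2}$ to be maximized at a concentrated distribution, yielding both terms in the $\min$: a single $v$ at $\sigma_v \approx pn$ contributes $\binom{pn}{2}$, and the residual mass produces $\binom{k - |\pi_R(I)| - pn}{2}$ (or $\binom{k - |\pi_R(I)|}{2}$ in the regime $k - |\pi_R(I)| \leq pn$).

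The main obstacle is obtaining the $(1+\varepsilon_1)$-tight constants simultaneously for both options in the $\min$. In particular, the sum bound on $\sum_v \sigma_v$ must ultimately match $k - |\pi_R(I)|$ rather than the much weaker $pNk$ that a naive double-count yields, and making this precision compatible with the case split at $k - |\pi_R(I)| = pn$ requires that the overlap term from \eqref{deg:projcodegblue} is genuinely absorbed into the $\varepsilon_1$ error. Similarly, the interpolation for \eqref{dim8} between the $C\log n$ codegree bound and the $|H_I|$ bound is delicate in the middle regime where neither alone suffices.
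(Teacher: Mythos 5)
Your plan follows the paper's general shape --- a size bound on $H_I \cap V_R$, then a Bonferroni/convexity analysis for \eqref{dim6}--\eqref{dim7} using the projection--codegree bound \eqref{deg:projcodegblue} --- but the size bound you derive is far too weak, and this is a genuine gap rather than a technicality.

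Your double-count via $f_r = |F(r) \cap \varphi(I)|$ and the fiber degrees gives $|H_I \cap V_R| \lesssim pN(k - |\pi_R(I)|)/t_1$, which has order $\sqrt{n}\,\log\log n/\log^{3/2} n$. That loses a factor of roughly $pN$ compared to what you actually need, because each $f_r - 1$ is weighted by $|N(r)| \approx pN$ in your bound even though only very few of those neighbors lie in $H_I$. The paper instead bounds $|H_I|$ by an inclusion--exclusion in the box: for any $H \subseteq H_I$, $k \geq |\bigcup_{v\in H} X_v(I)| \geq |H|\,t_1 - \binom{|H|}{2} C\log^3 n$ using the codegree bound \eqref{deg:eq2} on $N_3$, which forces $|H_I| = (1+o(1))\,k/t_1 < 2\log\log n$. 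That sharper bound is essential: in your Bonferroni step for \eqref{dim6}, the overlap term $\binom{|H_I\cap V_R|}{2} C\log^3 n$ must be $o(k) = o(\sqrt{n\log n})$, but with your estimate it is of order $n(\log\log n)^2$, which dwarfs $k$. Similarly, the interpolation you propose for \eqref{dim8}--\eqref{dim9} between the codegree bound $C\log n$ and the $|H_I|$ bound genuinely fails in the middle regime (e.g.\ $|\pi_R(I)| \sim k/2$), where both give $\gg \varepsilon_1 k^2$; with the correct $|H_I| = O(\log\log n)$ the pair-counting is unnecessary, since $\sum_{v\in H_I\cap V_R}|\pi_R(X_v(I))| \leq |H_I|\cdot(1+\varepsilon_2)pN = o(k)$ and convexity finishes directly. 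So the missing idea is the inclusion--exclusion argument on the sets $X_v(I)$ themselves (using \eqref{deg:eq2}), which the paper also reuses to establish $\sum_{v\in H_I}|X_v(I)| \leq (1+o(1))|\bigcup_{v\in H_I} X_v(I)|$, a step you will need anyway to make the bound $\bigl|\bigcup \pi_B(X_v(I))\bigr| \leq k - |\pi_R(I)| + o(k)$ come out tight rather than merely $\leq k$.
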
   

\begin{proof}
	Assuming $\mathcal{D}$, an analogous proof to that of Lemma \ref{lem:large} implies that \begin{equation}
	\sum_{v \in H_I} |X_v(I)| \leq (1+o(1)) \left| \bigcup_{v \in H_I} X_v(I)\right|.
	\label{eqn:sumxv}\end{equation} 
    Indeed, for any $H \subseteq H_I$ 
    \[
   k \geq \left| \bigcup_{v \in H_I} X_v(I)\right| \geq \sum_{v \in H} |X_v(I)| - \sum_{\{u, v\} \in {H\choose 2}} |X_v(I) \cap X_u(I)| \geq |H| \frac{\sqrt{n \log n}}{\log \log n} - {|H| \choose 2} C\log^3 (n).  
    \]
This is a contradiction for any $|H| = (1+o(1)) k \frac{\log \log n}{\sqrt{ n \log n}}$, implying that 
\begin{align}\label{smallH_I}
    |H_I| \leq (1+o(1)) k \frac{\log \log n}{\sqrt{ n \log n}} < 2\log \log n
\end{align} 
and hence~\eqref{eqn:sumxv}.      
  
  From this and the fact (from $\mathcal{D}$) that all vertices in $V_R$ and $V_B$ have degree at most $(1+\varepsilon_2)p{m} \leq  (\beta+\varepsilon_2)\sqrt{\frac{n}{\log^3 (n)}}$ we have   
  	\begin{align} 
		\sum_{v \in H_{I} \cap V_R} |\pi_R(X_v(I))| \leq |H_I|(1+\varepsilon_2)p{m} = o(k) && \text{and} && 	\sum_{v \in H_{I} \cap V_B} |\pi_B(X_v(I))| = o(k). \label{eq:projbound}
	\end{align} 
This already implies~\eqref{dim8} and~\eqref{dim9}.
    Now, if $u, v \in H_I \cap V_R$, part \eqref{deg:projcodegblue} of $\mathcal{D}$ shows that $\pi_B(X_u(I))$ and $\pi_B(X_v(I))$ have small intersection, from which one gets with 
    ~\eqref{smallH_I} that
    \begin{align*}
    \sum_{v \in H_I\cap V_R} |\pi_{B}(X_v(I))| = \left| \bigcup_{v \in H_I\cap V_R} \pi_B(X_v(I))\right|+o(k).
    \end{align*} 

    Order the vertices of $H_I \cap V_R$ as $v_1, \dots, v_t$, and set $x_{v_i} = |\pi_B(X_{v_i}(I))|$. We may assume that $x_{v_1}$ is the maximum of the $x_{v_i}$. Note that $x_{v_1} \leq |N_3(v_1)| \leq (1+\varepsilon_2)pn$.  
    By the convexity of the binomial coefficients, we have for 
    $n>n_0$
    \begin{align*}
        \sum \binom{x_{v_i}}{2}&\le {\sum x_{v_i}\choose 2}\\
        &\le {(1+\varepsilon_2)\left| \bigcup \pi_B(X_{v_i}(I))\right|\choose 2}\\
        &\le (1+3\varepsilon_2){\left| \bigcup \pi_B(X_{v_i}(I))\right|\choose 2},
    \end{align*}
    and for $\sum x_{v_i}>(1+\varepsilon_2)pn$,
\begin{align*}
        \sum \binom{x_{v_i}}{2}&\le  {x_{v_1}\choose 2}+\sum_{i\ge 2} \binom{x_{v_i}}{2}\\
        &\le {(1+\varepsilon_2)pn\choose 2}+{\sum x_{v_i}-(1+\varepsilon_2)pn\choose 2}\\
        &\le {(1+\varepsilon_2)pn\choose 2}+{(1+\varepsilon_2)\left(\left| \bigcup \pi_B(X_{v_i}(I))\right|-pn\right)\choose 2}\\
        &\le (1+3\varepsilon_2)\left( {pn\choose 2}+{\left| \bigcup \pi_B(X_{v_i}(I))\right|-pn\choose 2}\right).
    \end{align*}
    \noindent    Finally, to complete the proof of \eqref{dim6}, remember that $\varepsilon_2$ is sufficiently small compared to $\varepsilon_1$, and  bound 
    \[
    \left|\bigcup \pi_{B}(X_{v_i}(I))\right| \leq \left|\bigcup X_{v_i}(I) \right| \leq k - |\pi_R(I)| + o(k).  \]
This last inequality follows from \eqref{eq:projbound}, 
    observing that
    \begin{align*}
    |\pi_R(I)| &\leq \left|\pi_R\left( \bigcup X_{v_i}(I) \right) \right|+\left|\pi_R\left(\pi(I) \setminus \left(\bigcup X_{v_i}(I)\right) \right) \right| \\ 
    &\leq \left|\pi_R\left( \bigcup X_{v_i}(I) \right) \right|+\left|\pi(I) \setminus \left(\bigcup X_{v_i}(I)\right)  \right| \\ 
    &= o(k) +k-\left|\bigcup X_{v_i}(I)\right|.
    \end{align*}
The proof of \eqref{dim7} works analogously, considering $v \in H_I \cap V_B$.  
\end{proof}

\section{Independent sets in $G$ and the proof of Theorem \ref{main}}\label{sec:independence}

We now turn to the crucial part of Theorem \ref{main}, showing that vertex sets of size $k$ are not independent.  The only time that it is important to remove edges from monochromatic triangles by considering only $C^+(I)$ is in one place in the proof of Lemma~\ref{lem:RISI} below. For all previous arguments, we could have omitted all edges in monochromatic triangles since we bound $C(I)$ instead.

For what follows, fix a $k$-set $I \subseteq V(G)$.   
We will begin by revealing the edges in $G_R$ and $G_B$ incident to vertices not in $\pi_R(I) \cup \pi_B(I)$ and a small selection of edges within $\pi_R(I) \cup \pi_B(I)$. We ensure that most open pairs in $\pi_R(I) \cup \pi_B(I)$ remain to be considered while most closed pairs are already revealed. As all open pairs must be non-edges, this allows us to bound the probability that 
the set is independent at the end of the process.  
Consider the following procedure.  
\begin{enumerate} 
\item First, let $F_0= (V_R\cup V_B)\setminus (\pi_R(I) \cup \pi_B(I))$. For all vertices $v \in F_0$, reveal all neighbors and non-neighbors in $\pi_R(I)$ and $\pi_B(I)$ in $G_R$ and $G_B$ respectively.  These, in turn, reveal edges in $\tilde{E}(G)$.  

\item Let $F_1=\{v\in \pi_R(I)\cup\pi_B(I):|F(v) \cap I|>\log n\}$. For all vertices in $F_1$, reveal all neighbors and non-neighbors in $\pi_R(I)$ and $\pi_B(I)$ in $G_R$ and $G_B$ respectively.  These, in turn, reveal edges in $\tilde{E}(G)$.

\item Let $F_2=\{v\in \pi_R(I)\cup\pi_B(I):|N(v)\cap F_1|>\frac{t_2}{\log n}\}$. Note that 
\[(\pi_R(I)\cup\pi_B(I)) \cap H_I\subseteq F_2\subseteq L_I\cup H_I.\]   Indeed, for $v \in \pi_R(I) \cup \pi_B(I)$ with $v \not\in F_2$ then \[
|X_v(I)| < (1+\varepsilon_2)(\log^2 n)\frac{t_2}{\log n} + (\log n) (1 + \varepsilon_2)p{m} \ll t_1,
\]
so $v \not\in H_I$, proving the first inclusion.  The second inclusion follows as $|N(v) \cap F_1| > \frac{t_2}{\log n}$ along with the definition of $F_1$ implies that $|X_v(I)| > t_2$.

For all vertices in $F_2$, reveal all neighbors and non-neighbors in $\pi_R(I)$ and $\pi_B(I)$ in $G_R$ and $G_B$ respectively.  These, in turn, reveal edges in $\tilde{E}(G)$.

\item Reveal all pairs in $X_v(I)$ for $v\in F=F_0\cup F_1\cup F_2$.
\end{enumerate} 
To reduce notation, let $\mathcal R=\mathcal D\cap\mathcal S\cap \mathcal M$ in the following.
Let 
\begin{multline*}
f(\ell_R, \ell_B) =  {\ell_R \choose 2} + {\ell_B \choose 2} -  \min\left\{ \binom{k-\ell_R}{2} , \binom{pn}{2}+\binom{k-\ell_R-pn}{2}\right\}\\
- \min\left\{ \binom{k-\ell_B}{2} , \binom{pn}{2}+\binom{k-\ell_B-pn}{2}\right\}.
\end{multline*}
For integers $\ell_R,\ell_B\in [k]$ with $\ell_R\cdot\ell_B\ge k$, let $\mathcal{S}_{I,\ell_R,\ell_B}$ denote the event that $|\pi_R(I)| = \ell_R$ and $|\pi_B(I)| = \ell_B.$  By Lemmas~\ref{lem:large}-\ref{lem:huge}, if $\mathcal{R} \cap \mathcal{S}_{I,\ell_R,\ell_B}$ holds, then $\pi_R(I)\cup \pi_B(I)$ contains at least $f(\ell_R,\ell_B)-9\varepsilon_1k^2$ 
open pairs. As $|F_1|\le 2\frac{k}{\log n}$ and $|F_2|\le |L_I\cup H_I|< n^{\frac14} $ by the proof of Lemma~\ref{lem:large}, the procedure reveals at most $k|F_1\cup F_2|<\varepsilon_1k^2$ further pairs in $O(I)$. Thus at this point, there are at least $f(\ell_R,\ell_B)-10\varepsilon_1k^2$ open pairs unrevealed.

We observe
\begin{lemma}\label{lem:RISI}
Suppose $I \subseteq V(G)$ 
with $|I| = k$.  Let $\mathcal{B}_I$ be the event that $I$ is independent in $G$.  Then, assuming $n>n_0$, 
\[
\p(\mathcal{B}_I \cap \mathcal R\mid   \mathcal{S}_{I, \ell_R, \ell_B}) \leq (1-p)^{f(\ell_R, \ell_B) - \varepsilon^3 k^2} \leq \exp(-p(f(\ell_R, \ell_B) - \varepsilon^3k^2)).  
\]
\end{lemma}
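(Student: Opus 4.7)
The plan is to condition on the revealed information after the four-step revealing procedure and then use independence of the remaining Bernoulli edges to obtain a product bound on the conditional probability that $I$ is independent. After steps 1--4, the only unrevealed randomness is in the edges of $G_R$ with both endpoints in $U_R := \pi_R(I) \setminus F$ and of $G_B$ with both endpoints in $U_B := \pi_B(I) \setminus F$, where $F = F_0 \cup F_1 \cup F_2$. Conditional on $\mathcal{S}_{I,\ell_R,\ell_B}$ and the revealed information, these remaining edges are mutually independent Bernoulli$(p)$ variables across both individual pairs and the two colors.

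Next, I would observe that for any unrevealed open pair $e \in O(I)$, the hypothesis $e \notin C(I)$ means $e$ lies outside every $\binom{X_v(I)}{2}$, and hence outside each of the four edge-removal sets $\binom{X_{r_i}^+}{2}$, $\binom{X_{b_j}}{2}$, $\binom{X_{r_i}}{2}$, $\binom{X_{b_j}^+}{2}$. Consequently $e$ is an edge of $G_2$ if and only if $e$ is an edge of $G_1$, so $\mathcal{B}_I$ forces both $\pi_R(e) \notin E(G_R)$ and $\pi_B(e) \notin E(G_B)$. Letting $F_R \subseteq \binom{U_R}{2}$ and $F_B \subseteq \binom{U_B}{2}$ be the sets of distinct red and blue base pairs arising from the unrevealed open pairs, the independence of the remaining red and blue edges yields
\[
\Pr(\mathcal{B}_I \mid \text{revealed info}, \mathcal{S}_{I,\ell_R,\ell_B}) \le (1-p)^{|F_R|+|F_B|}.
\]

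The main step, and the principal obstacle, is to show $|F_R|+|F_B| \ge f(\ell_R,\ell_B) - \varepsilon^3 k^2$ on the event $\mathcal{R}\cap \mathcal{S}_{I,\ell_R,\ell_B}$. The preceding discussion already gives at least $f(\ell_R,\ell_B) - 10\varepsilon_1 k^2$ unrevealed open pairs, and I would argue that the maps $e \mapsto \pi_R(e)$ and $e \mapsto \pi_B(e)$ contribute enough distinct base pairs to $F_R$ and $F_B$. The delicate issue is that many open pairs can share the same red (or blue) base pair: a red-open base pair $(r,r')$ fails to appear in $F_R$ only if every matching partner $(b,b')$ inside $\varphi(I)$ is blue-closed or involves $F_1\cup F_2$. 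The convexity arguments in Lemma~\ref{lem:huge}, via its bounds on $\sum_{v\in H_I\cap V_R} \binom{|\pi_B(X_v(I))|}{2}$ and its blue analogue, show that the total contribution of such degenerate configurations is bounded by the two min-terms appearing in the definition of $f$ up to an $O(\varepsilon_1 k^2)$ additive error, which is exactly the deficit absorbed by $f$.

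Finally, taking expectations over the revealed information using the tower property, and bounding $\mathbf{1}_{\mathcal{R}} \le 1$ along the way, gives the first inequality of the lemma; the second follows immediately from the standard inequality $(1-p)^x \le \exp(-px)$ for $x \ge 0$.
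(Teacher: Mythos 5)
Your high-level outline of the revealing procedure is right, and the observation that an open pair $e\in O(I)$ forces both $\pi_R(e)\notin E(G_R)$ and $\pi_B(e)\notin E(G_B)$ is a correct pointwise statement. But the central step of your argument, bounding $\Pr(\mathcal{B}_I \mid \text{revealed info}) \leq (1-p)^{|F_R|+|F_B|}$ where $F_R,F_B$ are the projections of the unrevealed open pairs, is not valid, and this is exactly the difficulty that the paper's actual proof is designed to overcome. The issue is that which pairs are open depends on the \emph{unrevealed} randomness: for a pair $e$ with $\pi_R(e)=\{r_a,r_b\}\subseteq \pi_R(I)\setminus F$, whether $e$ is closed depends on whether some $r_i\in\pi_R(I)\setminus F$ is adjacent to both $r_a$ and $r_b$, and those two adjacencies are exactly the kind of pair you have not yet revealed. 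Thus $F_R$ and $F_B$ are themselves random variables on the remaining probability space, and you cannot first fix them and then multiply $(1-p)$ over their elements. Informally: the graph gets to ``choose'' which pairs are closed (by placing edges), and each choice allows a different set of projected pairs to be edges without violating independence. The bound $(1-p)^{|F_R|+|F_B|}$ does not account for the number of such choices, which is the source of the extra binomial factors ${3\varepsilon_1 k^2\choose u_R}{k^2\choose u_B}$ in the paper's estimate.

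The paper resolves this circularity with a genuinely different argument. It defines $T_R = E_{I,R}\setminus\bigcup_i\binom{N^+(r_i)}{2}$ and $T_B$ analogously, lets $U_R=T_R\cap E(G_R)$, $U_B=T_B\cap E(G_B)$, and decomposes over the sizes $u_R=|U_R|$, $u_B=|U_B|$. It then reveals $E_{I,B}$ in lexicographic order followed by $E_{I,R}$, and the whole argument hinges on two facts you do not address: (i) with this order, membership of each newly reached pair in $T_R\cup T_B$ is already determined by previously revealed information, so there is no circularity; and (ii) once $G_B$ is fully revealed, the set of red pairs that can lie in $U_R$ without contradicting independence is $\bigcup_{b_i\in\pi_B(I)\setminus F}\pi_R\binom{X_{b_i}(I)}{2}$, which has size at most $3\varepsilon_1 k^2$ on $\mathcal{R}$ by Lemmas~\ref{lem:large}--\ref{lem:small}. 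This is what bounds the combinatorial multiplicity. In addition, you work with $O(I)$/$C(I)$ rather than $O^+(I)$/$C^+(I)$; the paper emphasizes that this is the one place where the asymmetric ``$+$'' deletion rule matters, because a pair $\{r_a,r_b\}$ that lies in some $\binom{N(r_i)}{2}$ but in no $\binom{N^+(r_j)}{2}$ still forces a constraint (the corresponding red edges in $G_1$ survive the same-colour deletion rule), so $T_R$ rather than its $C(I)$-analogue is the correct set of constrained pairs. Your final appeal to ``the convexity arguments in Lemma~\ref{lem:huge}'' to supply $|F_R|+|F_B|\ge f(\ell_R,\ell_B)-\varepsilon^3 k^2$ is therefore directed at the wrong obstacle; even granting that estimate, the product bound it would be plugged into is not a correct upper bound on the conditional probability.
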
 

\begin{proof}
Let $E_{I,R}$ and $E_{I,B}$ be the sets of pairs not revealed so far in $G_R$ and $G_B$, and $E_I=E_{I,R}\cup E_{I,B}$.
The key observation regards how $I$ can be independent at the end.  Recall that 
if $I$ is independent, all edges in $E_{I}$ must be in $\pi_R(C(I))\cup \pi_B(C(I))$. More precisely, all edges in $E_{I,R}\cap E(G_R)$ must be in some ${N^+(r_i)\choose 2}$ or in some $\pi_R\left({N_{b_i}\choose 2}\right)$, and all edges in $E_{I,B}\cap E(G_B)$ must be in some ${N^+(b_i)\choose 2}$ or in some $\pi_B\left({N_{r_i}\choose 2}\right)$.

Let \begin{align*}
    T_R&= E_{I,R}\setminus \bigcup_{i=1}^{m} {N^+(r_i)\choose 2},
    &T_B&= E_{I,B}\setminus \bigcup_{i=1}^{m} {N^+(b_i)\choose 2}.
\end{align*}
Let $U_R=T_R\cap E(G_R)$, $U_B=T_B\cap E(G_R)$.
These are the edges in $E_I$ which are not closed$^+$ by vertices of the same color, and thus must be closed by vertices of the opposite color. For $0\le u_R,u_B\le k^2$, denote the event that $|U_R|=u_R$ and $|U_B|=u_B$ by $\mathcal U_{I,u_R,u_B}$. We have 
\begin{align}
    \p(\mathcal{B}_I \cap \mathcal  R\mid  \mathcal{S}_{I, \ell_R, \ell_B}) =
\sum_{u_R}\sum_{u_B}\p(\mathcal{B}_I \cap \mathcal R\cap \mathcal U_{I,u_R,u_B}\mid  \mathcal{S}_{I, \ell_R, \ell_B}) .\label{unicorn0}
\end{align}
Assume by symmetry that $u_R\ge u_B$. Then
\begin{align}
    \p(\mathcal{B}_I\cap \mathcal R\cap \mathcal U_{I,u_R,u_B} \mid  \mathcal{S}_{I, \ell_R, \ell_B})
    &\le {3\varepsilon_1 k^2 \choose u_R} p^{u_R} {|T_B| \choose u_B} p^{u_B} (1-p)^{|T_R|+|T_B|-u_R-u_B}\label{unicorn1}\\
    &\le {3\varepsilon_1 k^2 \choose u_R} p^{u_R} {k^2 \choose u_B} p^{u_B} (1-p)^{f(\ell_R, \ell_B)-15\varepsilon_1k^{2}}\label{unicorn2}\\
	&\leq  \left( \frac{ 3e\varepsilon_1 p k^2}{u_R}\right)^{u_R}   \left( \frac{epk^2}{u_B} \right)^{u_B}  (1-p)^{f(\ell_R, \ell_B)-15\varepsilon_1k^{2}}\nonumber\\
	&\leq \left( \frac{ 3e\sqrt{\varepsilon_1} p k^2}{u_R}\right)^{u_R}   \left( \frac{e\sqrt{\varepsilon_1}pk^2}{u_B} \right)^{u_B}  (1-p)^{f(\ell_R, \ell_B)-15\varepsilon_1k^{2}}\label{unicorn4}\\
	&\leq \exp\Big( 4\sqrt{\varepsilon_1} pk^2 - p(f(\ell_R,\ell_B)-10\varepsilon_1 k^2)\Big)\label{unicorn5}.
\end{align}
To see this, reveal first the pairs in $E_{I,B}$ in lexicographic order, and then the pairs in $E_{I,R}$. It is critically important that whenever we reach a pair in $E_{I}$ in this order, it is already determined if the pair is in $T_R\cup T_B$. 
For~\eqref{unicorn1}, note that there are ${|T_B|\choose u_B}$ choices for $U_B$, and for each of these choices each of these pairs is an edge of $G_B$ with probability $p$. For all other pairs in $T_B$, the pair must be a non-edge.
Then, the edges in $U_R$ have to be chosen from the pairs in 
\[
\bigcup_{b_i\in \pi_B(I)\setminus F}\pi_R\left({X_{b_i}(I)\choose 2}\right),
\]
a set that is completely determined once we have revealed all of $G_B$. As all vertices in $\pi_B(I)\setminus F$ are in $L_I\cup M_I\cup S_I$, Lemmas~\ref{lem:large}-\ref{lem:small} bound the number of such pairs to $3\varepsilon_1 k^2$ if $\mathcal R$ holds. Finally, all other pairs in $T_R\cup T_B$ must be non-edges.

For~\eqref{unicorn2}, we observe that trivially $|T_B|\le k^2$, and that all open pairs in $E_I$ are non-edges in $T_I$. Inequality~\eqref{unicorn4} is due to $u_R\ge u_B$, and~\eqref{unicorn5} follows from the fact that $(C/x)^x$ is maximized for $x=C/e$. 
Therefore by~\eqref{unicorn0},
\begin{align*}
\p(\mathcal{B}_I \cap \mathcal R\mid  \mathcal{S}_{I, \ell_R, \ell_B}) 
&\leq 
k^4 \exp\Big( 8\sqrt{\varepsilon_1} pk^2 - p(f(\ell_r,\ell_B)-10\varepsilon_1 k^2)\Big)\\
&=\exp\Big(  - p(f(\ell_r,\ell_B)-8\sqrt{\varepsilon_1} k^2 -10\varepsilon_1 k^2)+4\log k\Big),
\end{align*}
which, for sufficiently small $\varepsilon_1$ and $k>\sqrt{n_0}$ implies the result.
\end{proof}

We next resolve the dependence on $\mathcal{S}_{I, \ell_R, \ell_B}$.
\begin{lemma}\label{lem:RI}
    Suppose $I \subseteq V(G)$ 
    with $|I| = k$.  
    Then
\[
\p(\mathcal{B}_I \cap \mathcal R ) =o(1){n\choose k}^{-1}.  
\]
\end{lemma}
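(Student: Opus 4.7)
The plan is to decompose $\p(\mathcal B_I \cap \mathcal R)$ according to the sizes $(\ell_R, \ell_B) = (|\pi_R(I)|, |\pi_B(I)|)$. Setting $m_R = k - \ell_R$ and $m_B = k - \ell_B$, the law of total probability with Lemma~\ref{lem:RISI} yields
\[
\p(\mathcal B_I \cap \mathcal R) \;\le\; \sum_{\ell_R, \ell_B} \p(\mathcal S_{I, \ell_R, \ell_B})\, \exp\bigl(-p(f(\ell_R, \ell_B) - \varepsilon^3 k^2)\bigr),
\]
where the sum ranges over the $O(k^2)$ admissible pairs with $\ell_R \ell_B \ge k$ and $1 \le \ell_R, \ell_B \le k$. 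It then suffices to show each summand is $o\bigl(k^{-2}\binom{n}{k}^{-1}\bigr)$.

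For the marginal probability I would exploit the uniformity of $\varphi$: a direct count of injections with prescribed projection sizes gives
\[
\p(\mathcal S_{I, \ell_R, \ell_B}) \;\le\; \binom{N}{\ell_R}\binom{N}{\ell_B}\frac{(\ell_R \ell_B)^k}{\prod_{i<k}(N^2 - i)},
\]
and standard estimates reduce this to $\log \p(\mathcal S_{I, \ell_R, \ell_B}) \le -(m_R + m_B)\log(N/k) + O(k)$. Since $\log(N/k) = (\tfrac{1}{2}+o(1))\log n$, each collision in either projection costs a factor of roughly $n^{-1/2}$.

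Combining with Lemma~\ref{lem:RISI}, I would analyze the exponent $\Phi(m_R, m_B) := \log \p(\mathcal S_{I, \ell_R, \ell_B}) - p f(\ell_R, \ell_B)$, with the goal $\Phi + \log\binom{n}{k} + p\varepsilon^3 k^2 \to -\infty$. Since $f(\ell_R, \ell_B) = g(\ell_R) + g(\ell_B)$ for the single-coordinate function $g(\ell) = \binom{\ell}{2} - \min\{\binom{k-\ell}{2}, \binom{pn}{2} + \binom{k-\ell-pn}{2}\}$, the optimization decouples. Using $pk = \tfrac{1+\varepsilon}{2}\log n$, the partial derivative $\partial_{m_R}\Phi \approx -\log(N/k) + pk$ equals $+\tfrac{\varepsilon}{2}\log n > 0$ on $[0, pn]$, and when the second branch of $g$ activates at $m_R = pn$ it becomes $-\log(N/k) + p(k - pn) = (-\tfrac{1}{4}+\tfrac{\varepsilon}{2})\log n < 0$. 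Hence $\Phi$ attains its maximum at $m_R = m_B = pn$.

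The main obstacle is the final verification at this extremum, where the choice $\beta = 1/2$ becomes essential: one has $k - 2pn = \varepsilon\sqrt{n\log n}$, so
\[
p\,f(k-pn, k-pn) = p(k-1)(k - 2pn) \;\approx\; \tfrac{\varepsilon(1+\varepsilon)}{2}\sqrt{n}\,(\log n)^{3/2},
\]
while $-\log\p(\mathcal S_{I, k-pn, k-pn}) \ge 2pn\log(N/k) + O(k) \approx \tfrac{1}{2}\sqrt{n}\,(\log n)^{3/2}$. Subtracting $\log\binom{n}{k} \le \tfrac{1+\varepsilon}{2}\sqrt{n}\,(\log n)^{3/2}$ leaves a gap of $(\tfrac{\varepsilon^2}{2}+o(1))\sqrt{n}\,(\log n)^{3/2}$, which comfortably dwarfs both the correction $p\varepsilon^3 k^2 = O(\varepsilon^3)\sqrt{n}\,(\log n)^{3/2}$ (since $\varepsilon_1 \ll \varepsilon$) and the $O(\log k)$ loss from the union bound over the $O(k^2)$ summands. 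This delivers the desired bound $\p(\mathcal B_I \cap \mathcal R) = o(1)\binom{n}{k}^{-1}$.
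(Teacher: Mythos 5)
Your proposal is correct and follows the same broad skeleton as the paper's proof: decompose by the projection sizes $(\ell_R,\ell_B)$, bound $\p(\mathcal S_{I,\ell_R,\ell_B})$ via the same counting of injections, and combine with Lemma~\ref{lem:RISI}. Where you genuinely diverge is in closing the estimate. The paper performs a three-case analysis on $x_R+x_B=(\ell_R+\ell_B)/k$: a trivial bound when $x_R+x_B\le 1-\varepsilon/2$, the first branch of the min in $f$ when $x_R+x_B\ge 1+\varepsilon/2$, and a mixed branch argument in the intermediate range. You instead observe that the exponent $\Phi(m_R,m_B)$ decouples coordinatewise, compute that the marginal derivative is $\approx-\log(N/k)+p(k-1)>0$ for $m<pn$ and $\approx-\log(N/k)+p(k-pn-1)<0$ for $m>pn$, so the unique maximum is at $m_R=m_B=pn$, and then verify the bound at that single point. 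Your route is arguably tidier and makes transparent exactly why $\beta=1/2$ and $\kappa=1+\varepsilon$ are the right parameters: the extremal profile $\ell_R=\ell_B=k-pn$ is precisely where $k-2pn=\varepsilon\sqrt{n\log n}$, and the final surplus $(\tfrac{\varepsilon^2}{2}+o(1))\sqrt n\,(\log n)^{3/2}$ appears directly. Both approaches are rigorous; the paper's case split avoids any derivative computations at the cost of three separate verifications, while yours concentrates the work into one extremal calculation. One small caveat: your parenthetical attributing the smallness of the $p\varepsilon^3k^2$ correction to $\varepsilon_1\ll\varepsilon$ is slightly off — that correction is simply $\Theta(\varepsilon^3)\sqrt n\,(\log n)^{3/2}$ and is dominated by the $\Theta(\varepsilon^2)$ gap because $\varepsilon$ itself is small; the hierarchy $\varepsilon_1\ll\varepsilon$ was already consumed inside the proof of Lemma~\ref{lem:RISI} to produce the $\varepsilon^3k^2$ term.
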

\begin{proof}
Note that since $\pi$ is chosen uniformly at random,
$\pi(I)$ is a uniformly chosen $k$-subset of $V_R \times V_B$.  
    For any $\ell_R,\ell_B\in [k]$ with $\ell_R\cdot\ell_B\ge k$, and writing $\ell_R=x_Rk$, $\ell_B=x_Bk$, we have
    \begin{align*}
        \p(\mathcal{S}_{I, \ell_R, \ell_B}) &=\p(|\pi_R(I)|=\ell_R\land |\pi_B(I)|=\ell_B)\\
        &\le \p(|\pi_R(I)|\le \ell_R\land |\pi_B(I)|\le \ell_B)\\
        &\le {{m} \choose \ell_R} {{m} \choose \ell_B} \frac{{\ell_R\ell_B \choose k}}{{{m}^2 \choose k}}\\
        &\le e^{k} {{m} \choose \ell_R} {{m} \choose \ell_B} \left( \frac{\ell_R}{{m}} \right)^{k} \left( \frac{\ell_B}{{m}} \right)^{k}\\
        &\le {m}^{(x_R+x_B-2)k}k^{(-x_R-x_B+2)k}e^{(1+x_R+x_B)k}x_R^{(1-x_R)k}x_B^{(1-x_B)k}\\
        &=n^{-\frac12(2-x_R-x_B)k+o(k)}\\
        &= \exp\left( -\frac{2-x_R-x_B}{2}(1+o(1))k \log {n}\right).
    \end{align*}
    Therefore, using that ${n\choose k}=n^{\frac12 k+o(k)}$,
    \begin{align}
        \p(\mathcal{B}_I \cap \mathcal R) {n\choose k}
        &\le k^2\max_{\ell_R,\ell_B}\p(\mathcal{B}_I \cap \mathcal R\mid  \mathcal{S}_{I, \ell_R, \ell_B}) \p(\mathcal{S}_{I, \ell_R, \ell_B}) {n\choose k} \nonumber\\
        &\le \max_{\ell_R,\ell_B}\exp\left(-\frac{1-x_R-x_B}{2}(1+o(1))k \log {n}\right)\p(\mathcal{B}_I \cap \mathcal R\mid \mathcal{S}_{I, \ell_R, \ell_B}).\label{eq:long}
    \end{align}
    We consider three cases based on $x_R+x_B$. 
    If $x_R+x_B\le 1-\frac{\varepsilon}{2}$ at the maximum, then we use the trivial bound $\p(\mathcal{B}_I \cap \mathcal R\mid  \mathcal{S}_{I, \ell_R, \ell_B})\le 1$ to show that
\begin{align*}
    \eqref{eq:long}&\le  \exp \left(-\frac{\varepsilon}{4}k\log n + o(k\log n)\right)\\
    &=o(1).
\end{align*}
If $x_R+x_B\ge 1+\frac{\varepsilon}{2}$ at the maximum,  then by Lemma \ref{lem:RISI} and the values $\beta=\frac12$ and $\kappa=1+\varepsilon$,
\begin{align*}
    \eqref{eq:long}&\leq  \exp \left( -\frac{1-x_R-x_B}{2}(1+o(1))k \log {n} -p\left( \binom{\ell_R}{2} +\binom{\ell_B}{2}- \binom{k-\ell_R}{2} -\binom{k-\ell_B}{2} -\varepsilon^3k^2\right) \right)\\
        &\leq \exp \left(\left( \frac{x_R+x_B-1}{2} - \frac{\beta (-2\kappa+2\kappa(x_R+x_B)-2\varepsilon^3\kappa)}{2}\right)k\log n +o(k\log n)\right)\\
        &=\exp \left( \left( -\frac{\varepsilon(x_R+x_B-1-\varepsilon^3-\varepsilon^4) }{2}\right)k\log n +o(k\log n)\right)\\
        &\le \exp \left( - \left( \frac{\varepsilon^2(1-2\varepsilon^2-2\varepsilon^3)}{4}\right)k\log n +o(k\log n)\right)\\
        &=o(1).
        \end{align*}
Finally, if $1-\frac{\varepsilon}{2}<x_R+x_B<1+\frac{\varepsilon}{2}$ at the maximum, then we may assume by symmetry that $x_B\le x_R$ and thus $x_B\le \frac{x_R+x_B}{2}< \frac12+\frac{\varepsilon}{4}$, which implies that 
$k-\ell_B>pn$ and 
\[
 {k-\ell_B\choose 2} > {pn\choose 2}+{k-\ell_B-pn\choose 2}.
\]
Therefore, by Lemma \ref{lem:RISI} and the values $\beta=\frac12$ and $\kappa=1+\varepsilon$,
\begin{align*}
    \eqref{eq:long}&\leq  \exp \left( -\frac{1-x_R-x_B}{2}(1+o(1))k \log {n} -p\left( \binom{\ell_R}{2} +\binom{\ell_B}{2}- \binom{k-\ell_R}{2} - {pn\choose 2}-{k-\ell_B-pn\choose 2}-\varepsilon^3k^2\right) \right)\\
    &= \exp\left( \left(-\frac{1-x_R-x_B}{2}-\frac{\beta}{2\kappa}\left( -2\kappa^2+2\kappa^2(x_R+x_B)+2\kappa\beta-2x_B\kappa\beta-2\beta^2-2\varepsilon^3\kappa^2\right)\right)k\log{n}+o(k\log {n})    \right)\\
    &= \exp\left(\frac{1}{8(1+\varepsilon)}\left( 2x_B-1-(4\varepsilon^2+2\varepsilon)(x_R+x_B-1)-2\varepsilon x_R+8\varepsilon^3(1+\varepsilon)^2\right) k\log{n}+o(k\log {n})\right)\\
    &\le \exp\left(\frac{1}{8(1+\varepsilon)}\left(2(1-\varepsilon)x_B-1-(4\varepsilon^2+2\varepsilon)(x_R+x_B-1)+9\varepsilon^3\right)k\log{n}+o(k\log {n})\right)\\
    &\le \exp\left(\frac{1}{8(1+\varepsilon)}\left(2(1-\varepsilon)\frac{2+\varepsilon}{4}-1+2\varepsilon^3+\varepsilon^2+9\varepsilon^3\right)k\log{n}+o(k\log {n})\right)\\
    &=\exp\left(\frac{\varepsilon(-1+\varepsilon+22\varepsilon^2)}{16(1+\varepsilon)}k\log{n}+o(k\log {n})\right)\\
        &=o(1).
        \end{align*}
\end{proof}

We are finally ready to prove Theorem \ref{main}.

\begin{proof}[Proof of Theorem \ref{main}]
Now we estimate the probability that $G$ has an independent set of size $k$.  This is
\begin{align*} 
\p \left( \bigcup_{I \in {V(G)\choose k}} \mathcal{B}_I \right) &=  \p\left( \bigcup_{I \in {V(G)\choose k}} \mathcal{B}_I \cap \mathcal{R} \right) +\p\left( \bigcup_{I \in {V(G)\choose k}} \mathcal{B}_I \cap \mathcal{R}^c \right)\\
&\le \p\left( \bigcup_{I \in {V(G)\choose k}} \mathcal{B}_I \cap \mathcal{R} \right)+\p(\mathcal R^c)\\
&\le \sum_{I \in {V(G)\choose k}} \p(B_I\cap \mathcal R)+o(1)\\
&=o(1)
\end{align*} 
by Lemma \ref{lem:RI}.  Thus with probability $1-o(1)$, $G$ has $\alpha(G) < k =(1+\varepsilon)\sqrt{n \log n}.$ 
\end{proof}

\section{The hypergraph Ramsey number $R \bigl( S_4^{(3)},S_k^{(3)}\bigr)$} \label{sec:hyper}
In this section, we show that our method can also be applied in other settings, and we apply it to hypergraphs.
Ramsey numbers for hypergraphs are defined analogously to Ramsey numbers for graphs. Define the $3$-uniform star hypergraph $S_k^{(3)}$ as the hypergraph on $k$ vertices containing exactly all edges incident to a given central vertex. In particular, the hypergraph on four vertices with three edges is $S_4^{(3)}=K_4^{(3)-}$, the complete hypergraph minus one edge. The link of a vertex $v$ in a $3$-uniform hypergraph is the graph induced by the edges containing $v$.

Therefore, if we want to find a $2$-coloring of the complete hypergraph containing no red $S_4^{(3)}$ and no blue $S_k^{(3)}$, we are looking for a (red) hypergraph in which no link of any vertex contains a triangle or an independent set of size $k-1$. In other words, we are looking for a hypergraph in which every vertex link is a Ramsey graph for $R(3,k-1)$. This immediately implies that $R\bigl(S_4^{(3)},S_k^{(3)}\bigr)\le R(3,k-1)+1$.

Mubayi suggested to us that with only minor modifications, our proof for the lower bound for $R(3,k)$ also gives a lower bound for $R\bigl(S_4^{(3)},S_k^{(3)}\bigr)$, improving the implied constant in a recent paper by Mubayi and Spanier~\cite{mubayi2025k4freetriplesystemslarge}, which proves a conjecture by Conlon, Fox, He, Mubayi, Suk and Verstra\"ete~\cite{Conlon23} using a nibble argument. We outline the proof below, leaving the details to the reader.

\begin{theorem}\label{thm:hyper}
    $\left(\frac12-o(1)\right)\frac{k^2}{\log k}\le R\bigl(S_4^{(3)},S_k^{(3)}\bigr)\le \left(1+o(1)\right)\frac{k^2}{\log k}$.
\end{theorem}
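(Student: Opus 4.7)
The upper bound is immediate from the observation preceding the theorem that $R(S_4^{(3)},S_k^{(3)})\le R(3,k-1)+1$, combined with Shearer's bound $R(3,k-1)\le (1+o(1))k^2/\log k$.

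For the lower bound, set $n=(1/2-o(1))k^2/\log k$. As noted right before the theorem, producing a 2-colouring of $K_n^{(3)}$ with no red $S_4^{(3)}$ and no blue $S_k^{(3)}$ is equivalent to producing a 3-uniform hypergraph $\mathcal H$ on $n$ vertices in which every vertex link is a Ramsey graph for $R(3,k-1)$, i.e.\ a triangle-free graph on $n-1$ vertices with independence number at most $k-2$. The plan is to mirror the construction of $G$ in Section~\ref{sec:construction} so that each of the $n$ vertex links of $\mathcal H$ is distributed essentially like the graph $G_2$ from the proof of Theorem~\ref{main}.

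Concretely, one samples $G_R,G_B\sim G(N,p)$ with the same parameters $N=n/\log^2 n$ and $p=\beta\sqrt{\log n/n}$, builds the analogue of the co-normal product on $V_R\times V_B$, and then samples $n$ vertices through a uniformly random injection $\varphi$, as in Steps~1, 2, and 4 of Section~\ref{sec:construction}. Hyperedges of $\mathcal H$ are then defined by a symmetric condition on the triple $(\varphi(v_1),\varphi(v_2),\varphi(v_3))$---depending only on whether the three pairs of projections to $V_R$ and $V_B$ form edges in $G_R$ or $G_B$---followed by a deletion rule analogous to Step~3 that ensures every link of the resulting hypergraph is triangle-free. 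With the right symmetric condition, the link of each vertex is (up to harmless conditioning) distributed as a triangle-free graph of the same type as $G_2$, so that $\mathcal H$ contains no $S_4^{(3)}$ by construction.

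It then remains to show $\mathcal H$ contains no $S_k^{(3)}$, i.e.\ that every link has no independent set of size $k-1$. For each fixed vertex $v$, the argument of Sections~\ref{sec:prelim}--\ref{sec:independence} applies verbatim to the link of $v$: the structural Lemmas~\ref{lem:fiber_and_degree}--\ref{lem:huge} govern the distribution of closed and open pairs in the link exactly as they did in $G_2$, and the calculation of Lemma~\ref{lem:RI} gives that a fixed $(k-1)$-set in the link of a fixed vertex fails to contain an edge with probability $\exp(-\Omega(k\log n))\cdot\binom{n-1}{k-1}^{-1}$. Union-bounding over the $n$ vertex links and the $\binom{n-1}{k-1}$ potential independent sets per link contributes only an extra factor of $n$, which is easily absorbed by the exponential slack.

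\textbf{Main obstacle.} The real work is specifying the symmetric hyperedge condition (and its accompanying deletion step) so that three things hold simultaneously: the condition is genuinely symmetric in its three arguments, after the deletion step every vertex link is triangle-free, and the resulting link is distributed as (a conditioned version of) $G_2$ so that the entire open/closed-pair machinery of Section~\ref{sec:prelim} and the probability estimate of Lemma~\ref{lem:RI} apply without modification. Once this set-up is in place, the independence analysis of Section~\ref{sec:independence} carries over per link, and the theorem follows.
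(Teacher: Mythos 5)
Your upper bound is correct and matches the paper. For the lower bound, your high-level strategy is the right one, but the ``main obstacle'' you explicitly flag---finding a symmetric hyperedge condition on a graph-based construction so that every link ends up distributed like $G_2$---is a genuine gap, and the paper resolves it in a way your proposal does not anticipate. Instead of sampling two random \emph{graphs} $G_R,G_B\sim G(N,p)$ and trying to cook up a symmetric triple condition from pair-edges, the paper samples two random \emph{3-uniform hypergraphs} $H_R,H_B\sim H(N,p)$ and declares a triple $(r,b)(r',b')(r'',b'')$ with all six coordinates distinct to be a hyperedge whenever $rr'r''\in E(H_R)$ or $bb'b''\in E(H_B)$. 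This is automatically symmetric, and the link of $(v,v')$ is then precisely the co-normal product of the link of $v$ in $H_R$ with the link of $v'$ in $H_B$, i.e.\ exactly an instance of $G_1$. No ad hoc symmetric condition on graph edges can reproduce this: a triple depending on ``three pairs of projections forming edges'' would make the link of $(v,v')$ contain the pair $\{(r',b'),(r'',b'')\}$ only under conditions involving more than the single edge $r'r''$ or $b'b''$, so the link would not match $G_2$'s edge distribution. The paper's deletion step (processing red edges lexicographically, then blue, then deleting red edges in blue-heavy $S_4^{(3)}$'s and blue edges in red-heavy ones) then guarantees every link is triangle-free while leaving each link close enough to $G_2$ for the analysis to transfer.

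There is also a second point your proposal does not mention: even once the links match $G_1$, edges of a fixed link can be deleted because of copies of $S_4^{(3)}$ centered at a \emph{different} vertex, and these deletions are invisible to the open/closed-pair bookkeeping inside that link. The paper shows these extra closed pairs contribute $O\bigl(\tfrac{k^2}{\log n}\bigr)=o(k^2)$ per link in expectation (via a short counting argument with a Chernoff bound), so they are absorbed into the error term. Relatedly, the revealing procedure from Lemma~\ref{lem:RISI} needs a preliminary step: first reveal all edges of $H_R$ and $H_B$ not containing $v$ and $v'$, and only then run the link-level reveal-and-count argument. Without the hypergraph sampling and these two additional ingredients, the argument does not close.
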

\begin{proof}[Proof sketch]
In a similar way as we have constructed $G$, we construct a random hypergraph $H$ as follows:

\begin{enumerate} 
\item Independent $H({m},p)$ random $3$-graphs $H_{R}$ and $H_{B}$ are generated on vertex sets $V_R = \{r_1, \dots, r_{{m}}\}$ and $V_B = \{b_1, \dots, b_{{m}}\}$, where $p = \beta \sqrt{\frac{\log n}{n}}.$  Edges in $H_R$ and $H_B$ are referred to as red and blue edges respectively.  
 \item Let $V(H)=\{v_1,\ldots,v_n\}$. Choose an injection $\pi:V(H)\rightarrowtail V_R \times V_B$ uniformly at random. A triple $v_iv_jv_\ell$ is a red edge in $\tilde{E}_R(H)$ if for $(\pi(v_i),\pi(v_j),\pi(v_\ell))=((r,b),(r',b'),(r'',b''))$, $rr'r'\in E(H_R)$ and $b,b',b''\in V_B$ are three different vertices. Blue edges $\tilde{E}_B(H)$ are defined symmetrically.
\item Create the final $S_4^{(3)}$-free (multi)graph $H$ with edge set $E(H)\subseteq \tilde{E}(H)=\tilde{E}_R(H)\cup \tilde{E}_B(H)$ as follows:  
\begin{itemize} 
	\item For any red copy of $S_4^{(3)}$, out of the three red edges in $\tilde{E}_R(H)$ considered in lexicographical order (i.e., edges are ordered first by their lowest red coordinate of their image, then by their middle red coordinate, and then by the highest red coordinate), the last edge is deleted.  
	\item The same is done with blue copies of $S_4^{(3)}$, considering the lexicographic order on the blue edges.  
        \item From any non-monochromatic $S_4^{(3)}$ with exactly two blue edges, the red edge is removed.
	\item From any non-monochromatic $S_4^{(3)}$ with exactly two red edges, the blue edge is removed.
\end{itemize}  
\end{enumerate}

The link of any vertex $v\in V(H)$ is exactly the same as the graph $G$ we constructed for $R(3,k)$. Considering pairs $(v,I)$ instead of $I$ in $V(G)$ yields the exact same computations as in Section~\ref{sec:prelim}. The extra factor of $n$ when counting these pairs can always be absorbed in the error terms.

The only addition is that some edges in the link of $v$ may be deleted due to $S_4^{(3)}$ in $\tilde{E}(H)$ not centered at $v$, and these are not captured in our analysis of the sets $X_w(I)$ inside the link of $v$. For this to happen and $\pi(v)=(v_R,v_B)$, we would need three vertices $x,y,z\in V_R$ (or $V_B$) with edges $v_Rxy,xyz\in E(H_R)$ (or $v_Bxy,xyz\in E(H_B)$) in which case the pair $yz$ would be closed in the link of $v$. For every $k$-set $\pi(I)\subseteq V_R\times V_B$, we expect at most
\[
\pi_R(I)^2{m}p^2\le \frac{k^2}{4}\frac{n}{\log^2(n)}\frac{\log n}{n}=\frac{k^2}{4\log n}
\]
such closed pairs $yz$ in $\pi(I)$ coming from $H_R$, and the same bound from $H_B$. A Chernoff bound then shows that with high probability the number of these pairs is $o(k^2)$ for all $(v,I)$. 

Before the proof analogous to Lemma~\ref{lem:RISI}, we first reveal all edges of $H_R$ and $H_B$ not containing $v_R$ and $v_B$, then we proceed as in the graph case with edges in the link not contained in $\pi_R(I)\cup \pi_B(I)$, and then iteratively reveal all edges in closed pairs in the link until the process stops.

The proof of Lemma~\ref{lem:RISI} and the remaining computations are the same, concluding the proof sketch.
\end{proof}

\section{Conclusion}

In any~$n$-vertex triangle free graph~$G$ we have that~$\alpha(G) \geq \Delta(G) \geq \frac{2e(G)}{n}$. Our construction has~$\alpha(G) = (1+o(1))\frac{2e(G)}{n} = (1+o(1))\sqrt{n \log n}$; furthermore these independent sets appear both as the neighborhoods of single vertices, as well as `mixed' throughout~$G$, resembling those in a random graph of the same density.
This is one of the main supporting heuristics for Conjecture~\ref{con1/2}, and we agree that Shearer's~\cite{Shearer83} upper bound on $R(3,k)$ can likely be improved. 
 Improving Shearer's Theorem is related to the well-studied question 
 of whether there is an efficient algorithm for finding large independent sets in the random graph $G(n,p)$. 
For most given densities, known efficient algorithms can find only independent sets of about half the size of the independence number.

Shearer's algorithmic lower bound on the independence number is
$\alpha(G)\ge (1+o(1)) n\frac{\log d}{d}$,
where $d$ is the average degree of a triangle-free graph.
This is half of our upper bound when using the average degree in our construction, matching the gap that persists for random graphs. For the best general lower bound on the independence number, one then
uses $\alpha(G)\ge d$ and optimizes the average degree.
 
This gives further support to Conjecture~\ref{con1/2}: if one believes that maximum Ramsey graphs have certain pseudorandom properties, it is unsurprising that a lower bound implied by a bound from a greedy algorithm based on the average degree would be smaller than the actual independence number by a factor of $\sqrt{\frac12}$.

\section{Acknowledgments}

An early version of the construction of $G$ was first considered by the fourth author after listening to Marcus Michelen's beautiful talk about their new lower bound at the ``Rocky Mountain Summer Workshop 2025" at Colorado State University, supported by NSF grant CCF-2309707, followed by stimulating discussions over the next days.

The authors then started seriously working on this project at the ``Graduate Workshop in Combinatorics" which was held a couple weeks later at Iowa State University in Ames, Iowa, and was supported by NSF grant DMS-2152490 and the Combinatorics Foundation. 

We would like to thank David Conlon, Dhruv Mubayi, Will Perkins and Yuval Wigderson for helpful comments.

\bibliographystyle{abbrvurl}
\bibliography{refs.bib}

\appendix
\section{Inequalities}
Here we briefly record a few standard inequalities we use, in the notation of Frieze and Karo\'nski~\cite{FriezeKarBook}.

\begin{lemma}
\label{binIE}
\begin{align*}
    1+x&\le e^x,~&\forall x.\\
    \left(\frac{n}{k}\right)^k&\le \binom{n}{k}\le \left(\frac{en}{k}\right)^k,~&\forall n\ge k.
\end{align*}
\end{lemma}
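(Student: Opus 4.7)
The plan is to prove the two items separately; both are standard and require only elementary calculus and manipulations of factorials.

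For part (1), I would establish $1+x \le e^x$ by the standard tangent-line argument for the convex function $e^x$. Concretely, let $f(x) = e^x - 1 - x$. Then $f'(x) = e^x - 1$ vanishes only at $x=0$, is negative for $x<0$, and positive for $x>0$, so $x=0$ is the unique global minimum of $f$. Since $f(0)=0$, we conclude $f(x)\ge 0$ for all real $x$. Equivalently, one can just invoke convexity of $e^x$: the line $y=1+x$ is its tangent at the origin, and a convex function lies above any of its tangents.

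For part (2), I would handle the two bounds separately. For the lower bound, expand $\binom{n}{k} = \prod_{i=0}^{k-1} \frac{n-i}{k-i}$ and observe that for $n \ge k$ and $0 \le i \le k-1$ each factor satisfies $\frac{n-i}{k-i} \ge \frac{n}{k}$, which rearranges to the trivial $i(n-k) \ge 0$. Multiplying the $k$ inequalities yields $\binom{n}{k} \ge (n/k)^k$.

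For the upper bound, first note $\binom{n}{k} = \frac{n(n-1)\cdots(n-k+1)}{k!} \le \frac{n^k}{k!}$, and then it suffices to show $k! \ge (k/e)^k$. The cleanest way I can think of is via the Taylor series: $e^k = \sum_{i \ge 0} \frac{k^i}{i!} \ge \frac{k^k}{k!}$, which rearranges to $k! \ge (k/e)^k$. Combining the two bounds gives $\binom{n}{k} \le n^k \cdot (e/k)^k = (en/k)^k$. No step poses a real obstacle; the only mild choice is to avoid invoking Stirling's formula, which is easily sidestepped by the one-line Taylor-series trick above.
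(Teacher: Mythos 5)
Your proof is correct. The paper itself gives no proof of this lemma; it is recorded in the appendix purely as a standard fact, citing the textbook of Frieze and Karo\'nski for reference. Your arguments—the tangent-line (or first-derivative) proof that $1+x\le e^x$, the term-by-term bound $\frac{n-i}{k-i}\ge\frac{n}{k}$ for the lower bound on $\binom{n}{k}$, and the one-line Taylor-series bound $e^k\ge\frac{k^k}{k!}$ (hence $k!\ge(k/e)^k$) to obtain the upper bound—are precisely the standard textbook arguments and fill in the omitted proof cleanly. There is no gap, and no meaningful comparison with the paper is possible since the paper chose not to prove the statement at all.
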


\begin{theorem}[Chernoff/Hoeffding Inequality]~\\ \label{CH}
Suppose that
$S_n = X_1 +X_2 +\ldots+X_n$ where $0\le X_i\le 1$ are independent random variables, $\mu_i=\E (X_i)$, $\mu=\sum \mu_i$. Then for $t\ge 0$,
\[
\P\left( S_n\ge \mu+t\right)\le \exp\left( -\frac{t^2}{2(\mu+t/3)}\right),
\]
and for $0\le t\le \mu$
\[
\P\left( S_n\le \mu-t\right)\le \exp\left(-\frac{t^2}{2(\mu-t/3)}\right).
\]
\end{theorem}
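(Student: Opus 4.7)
The plan is the standard moment generating function approach due to Bernstein. First I would apply Markov's inequality to the random variable $e^{\lambda S_n}$ for a parameter $\lambda > 0$ to be optimized later, obtaining
\[
\P(S_n \ge \mu + t) \;=\; \P\bigl(e^{\lambda S_n} \ge e^{\lambda(\mu+t)}\bigr) \;\le\; e^{-\lambda(\mu+t)}\,\E\bigl(e^{\lambda S_n}\bigr).
\]
By independence of the $X_i$, the moment generating function factorizes as $\E(e^{\lambda S_n}) = \prod_i \E(e^{\lambda X_i})$. Because $0 \le X_i \le 1$, convexity of $x \mapsto e^{\lambda x}$ on $[0,1]$ gives the pointwise estimate $e^{\lambda x} \le 1 + x(e^\lambda - 1)$, so $\E(e^{\lambda X_i}) \le 1 + \mu_i(e^\lambda - 1) \le \exp\!\bigl(\mu_i(e^\lambda-1)\bigr)$ by Lemma~\ref{binIE}(1). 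Multiplying over $i$ produces
\[
\P(S_n \ge \mu + t) \;\le\; \exp\!\bigl(\mu(e^\lambda - 1) - \lambda(\mu+t)\bigr).
\]

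Next I would optimize the right-hand side. Differentiating in $\lambda$ yields the minimizer $\lambda^* = \log(1 + t/\mu)$, and substituting back gives the classical Chernoff form
\[
\P(S_n \ge \mu + t) \;\le\; \exp\!\bigl(-\mu\, h(t/\mu)\bigr), \qquad h(\delta) := (1+\delta)\log(1+\delta) - \delta.
\]
To convert this into the Bernstein form stated in the theorem, I would establish the elementary inequality
\[
h(\delta) \;\ge\; \frac{\delta^2}{2+\tfrac{2}{3}\delta} \qquad (\delta \ge 0),
\]
after which substituting $\delta = t/\mu$ immediately yields $\exp\!\bigl(-t^2 / (2(\mu + t/3))\bigr)$.

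The lower tail is entirely symmetric: apply Markov's inequality to $e^{-\lambda S_n}$ with $\lambda > 0$, use the convexity bound $e^{-\lambda x} \le 1 - x(1 - e^{-\lambda})$ for $x \in [0,1]$, and optimize to obtain $\P(S_n \le \mu - t) \le \exp\!\bigl(-\mu\, h^{-}(t/\mu)\bigr)$, where $h^{-}(\delta) = (1-\delta)\log(1-\delta) + \delta$. The companion inequality $h^{-}(\delta) \ge \delta^2 / (2 - \tfrac{2}{3}\delta)$ for $0 \le \delta \le 1$ then produces the stated bound.

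The main obstacle is verifying the two elementary inequalities relating $h$ and $h^{-}$ to the Bernstein denominators; these are routine but slightly fiddly. A clean route is to note that in each case both sides vanish at $\delta = 0$ and then to compare derivatives, reducing the claim to showing non-negativity of an explicit smooth function on the relevant interval; alternatively one can argue via the Taylor expansion $h(\delta) = \sum_{k \ge 2} \frac{(-1)^k \delta^k}{k(k-1)}$ and bound tails geometrically. Everything else is boilerplate.
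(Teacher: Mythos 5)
The paper does not prove this statement at all: it is recorded in the appendix as a standard inequality, with a citation to Frieze and Karo\'nski, so there is no in-paper argument to compare against. Your proposal is the standard and correct exponential-moment (Bernstein/Chernoff) derivation: Markov on $e^{\lambda S_n}$, the chord bound $e^{\lambda x}\le 1+x(e^\lambda-1)$ on $[0,1]$, optimization at $\lambda^*=\log(1+t/\mu)$ to get $\exp(-\mu h(t/\mu))$, and then the elementary comparisons $h(\delta)\ge \delta^2/(2+\tfrac23\delta)$ and $h^-(\delta)\ge \delta^2/(2-\tfrac23\delta)$, which do yield exactly the stated denominators $2(\mu+t/3)$ and $2(\mu-t/3)$. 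The only caveat worth flagging is in your fallback route for the comparison inequalities: the series $h(\delta)=\sum_{k\ge2}\frac{(-1)^k\delta^k}{k(k-1)}$ is alternating and only converges for $\delta\le 1$, so the ``bound tails geometrically'' option does not directly cover the upper tail with $t>\mu$; for that regime you should rely on your primary route of comparing derivatives (both sides and their first derivatives vanish at $\delta=0$, and the second-derivative comparison $\tfrac{1}{1+\delta}\ge \tfrac{1}{(1+\delta/3)^3}$ closes it for all $\delta\ge0$). For the lower tail the termwise comparison $\tfrac{1}{k(k-1)}\ge\tfrac{1}{2\cdot 3^{k-2}}$ works cleanly on $0\le\delta\le1$, which is the full relevant range there. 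With that small repair noted, the proof is complete and is the proof any standard reference would give.
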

\begin{corollary}\label{CHc}
    If $t \geq \frac{3}{2} \mu$, then 
    \[
    \P(S_n\ge \mu + t)\le \exp\left( -\frac{t}{2}\right).
    \]
\end{corollary}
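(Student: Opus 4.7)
The plan is to derive the corollary directly from the upper-tail bound in Theorem~\ref{CH} by a one-line algebraic simplification of the exponent under the hypothesis $t \ge \tfrac{3}{2}\mu$. The Chernoff/Hoeffding bound gives
\[
\P(S_n \ge \mu + t) \le \exp\!\left(-\frac{t^2}{2(\mu + t/3)}\right),
\]
so it suffices to show that the exponent $\frac{t^2}{2(\mu + t/3)}$ is at least $\frac{t}{2}$ whenever $t \ge \tfrac{3}{2}\mu$, which is equivalent to $\mu + t/3 \le t$.

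First I would rearrange the hypothesis $t \ge \tfrac{3}{2}\mu$ as $\mu \le \tfrac{2}{3}t$. Adding $t/3$ to both sides gives $\mu + t/3 \le \tfrac{2}{3}t + \tfrac{1}{3}t = t$, which is the inequality needed. Substituting this denominator bound into the exponent yields
\[
\frac{t^2}{2(\mu + t/3)} \;\ge\; \frac{t^2}{2t} \;=\; \frac{t}{2},
\]
and since $x \mapsto e^{-x}$ is decreasing, we conclude
\[
\P(S_n \ge \mu + t) \;\le\; \exp\!\left(-\frac{t^2}{2(\mu + t/3)}\right) \;\le\; \exp\!\left(-\frac{t}{2}\right),
\]
as claimed.

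There is no real obstacle here: the corollary is essentially the statement that in the ``large deviation'' regime $t \gg \mu$, the quadratic Chernoff exponent degrades to the linear (Poisson-like) rate $t/2$, and the constant $\tfrac{3}{2}$ in the hypothesis is exactly the threshold at which the substitution $\mu + t/3 \le t$ becomes valid. The only thing worth double-checking is that the constants in Theorem~\ref{CH} match those used in the applications (e.g.\ in the proof of Lemma~\ref{lem:med}, where the corollary is applied with $t \ge 3\E(Z)$, comfortably satisfying $t \ge \tfrac{3}{2}\E(Z)$).
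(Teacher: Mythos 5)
Your proof is correct and is exactly the intended derivation: the paper states Corollary~\ref{CHc} without proof, and the only natural argument is the one you give, namely observing that $t \ge \tfrac{3}{2}\mu$ is equivalent to $\mu + t/3 \le t$, so the Chernoff exponent $\tfrac{t^2}{2(\mu+t/3)}$ is at least $\tfrac{t}{2}$. No issues.
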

\begin{theorem}[McDiarmid's Inequality]~\\ \label{McD}
Suppose that $Z=Z(W_1,W_2,\ldots,W_n)$ is a random variable that depends on $n$ independent random variables $W_1,W_2,\ldots ,W_n$.
Further, suppose that 
\[
|Z(W_1,W_2,\ldots,W_i,\ldots,W_n)-Z(W_1,W_2,\ldots,W_i',\ldots,W_n)|\le c_i
\]
for all $i=1,2,\ldots, n$ and $W_1,W_2,\ldots,W_n,W_i'$. Then for all $t>0$ we have
\begin{align*}
    \P( Z\ge \E (Z)+t)&\le \exp\left(-\frac{t^2}{2\sum_{i=1}^n c_i^2}\right),\text{ and}\\
    \P( Z\le \E (Z)-t)&\le \exp\left(-\frac{t^2}{2\sum_{i=1}^n c_i^2}\right).
\end{align*}
\end{theorem}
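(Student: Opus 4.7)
The plan is to prove McDiarmid's inequality by the standard route of constructing a Doob martingale and applying the Azuma--Hoeffding inequality. Concretely, define the filtration $\mathcal{F}_i = \sigma(W_1,\ldots,W_i)$ for $i=0,1,\ldots,n$ (with $\mathcal{F}_0$ the trivial $\sigma$-algebra), and set $Z_i = \E[Z \mid \mathcal{F}_i]$, so that $Z_0 = \E Z$ and $Z_n = Z$. Then $Z - \E Z = \sum_{i=1}^n D_i$ where $D_i = Z_i - Z_{i-1}$ is a martingale difference sequence adapted to $\mathcal{F}_i$. I would prove only the upper tail; the lower tail follows by applying the same argument to $-Z$.

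The core step is to bound the conditional range of $D_i$. Fix values $w_1,\ldots,w_{i-1}$ and, using independence of $W_1,\ldots,W_n$, write
\[
g_i(w_1,\ldots,w_i) := \E[Z \mid W_1=w_1,\ldots,W_i=w_i],
\]
so that $Z_i = g_i(W_1,\ldots,W_i)$ and $Z_{i-1} = \E_{W_i}[g_i(W_1,\ldots,W_{i-1},W_i)]$, where the expectation is over the marginal of $W_i$. By the bounded differences hypothesis, for any two values $w_i, w_i'$,
\[
|g_i(w_1,\ldots,w_{i-1},w_i) - g_i(w_1,\ldots,w_{i-1},w_i')| \le c_i,
\]
since this difference is an expectation (over $W_{i+1},\ldots,W_n$, which are independent of $W_i$) of quantities each bounded by $c_i$. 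Hence, conditional on $\mathcal{F}_{i-1}$, the random variable $D_i$ lies in an interval of length $c_i$ and has mean zero.

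Given this, I invoke Hoeffding's lemma, which states that if $X$ is mean-zero and supported in an interval of length $L$, then $\E e^{\lambda X} \le \exp(\lambda^2 L^2/8)$. Applying this conditionally to each $D_i$ and iterating the tower property,
\[
\E\bigl[\exp(\lambda (Z-\E Z))\bigr] = \E\!\left[\prod_{i=1}^n \E\!\left[e^{\lambda D_i}\mid \mathcal{F}_{i-1}\right]\right] \le \exp\!\left(\tfrac{\lambda^2}{8}\sum_{i=1}^n c_i^2\right).
\]
Then Markov's inequality applied to $e^{\lambda(Z-\E Z)}$ and optimization over $\lambda = 4t/\sum c_i^2$ yields the desired bound $\exp\!\left(-t^2/(2\sum c_i^2)\right)$.

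The only subtle step is the conditional range bound on $D_i$: one must be careful that $W_i$ is integrated out while $W_{i+1},\ldots,W_n$ are integrated out in both $Z_i$ and $Z_{i-1}$ in the same way, so that the $c_i$ bound passes cleanly through the conditional expectation. Everything else is routine: Hoeffding's lemma is a standard convexity/Taylor estimate on the moment generating function of a bounded mean-zero variable, and the rest is the usual Chernoff-type exponential Markov argument.
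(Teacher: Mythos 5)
Your proof is correct and is the standard Doob-martingale argument (bounded differences $\Rightarrow$ conditional range bound on the martingale increments $\Rightarrow$ Hoeffding's lemma $\Rightarrow$ exponential Markov); the paper itself offers no proof of this statement, recording it only as a standard tool with a citation to Frieze and Karo\'nski, so there is nothing to compare against. One small arithmetic remark: with the conditional bound $\E\bigl[e^{\lambda D_i}\mid \mathcal{F}_{i-1}\bigr]\le \exp\bigl(\lambda^2 c_i^2/8\bigr)$, the choice $\lambda = 4t/\sum_i c_i^2$ actually yields the stronger tail $\exp\bigl(-2t^2/\sum_i c_i^2\bigr)$, not $\exp\bigl(-t^2/(2\sum_i c_i^2)\bigr)$ as you wrote; since the stronger bound implies the one stated in the theorem, your argument still establishes the claim.
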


\end{document}